\theoremstyle{plain}
\newtheorem{lemma}{\bf Lemma}[section]
\newtheorem{remark}{\bf Remark}[section]
\newtheorem{theorem}{\bf Theorem}[section]
\def\tto{\;{\lower 1pt \hbox{$\rightarrow$}}\kern -10pt
\hbox{\raise 2pt \hbox{$\rightarrow$}}\;}
\def\Bar{\overline}
\def\B{I\!\!B}
\def\R{{\rm I\!R}}
\def\gph{\mbox{\rm gph}\,}
\def\ker{\mbox{\rm ker}\,}
\newtheorem{Theorem}{Theorem}[section]
\newtheorem{Example}[Theorem]{Example}
\begin{document}

\date{}
\title{\bf Stability analysis of split equality and split feasibility problems}

\author{Vu Thi Huong\thanks{Digital Data and Information for Society, Science, and Culture,
		Zuse Institute Berlin, 14195 Berlin, Germany; and
		Institute of Mathematics, Vietnam Academy of Science and Technology, 10072 Hanoi, Vietnam. Emails: huong.vu@zib.de; vthuong@math.ac.vn},\quad Hong-Kun Xu\thanks{School of  Science, Hangzhou Dianzi University, 310018 Hangzhou, China; and College of Mathematics and Information Science, Henan Normal University, 453007 Xinxiang,  China. Email: xuhk@hdu.edu.cn. Corresponding author},  \quad  Nguyen Dong Yen\thanks{Institute of Mathematics, Vietnam Academy of Science and Technology, 10072 Hanoi,  Vietnam. Email: ndyen@math.ac.vn}}

\maketitle

\begin{quote}
\noindent {\bf Abstract.}
In this paper, for the first time in the literature, we study the stability of solutions of two classes of feasibility
(i.e., split equality and split feasibility) problems by set-valued and variational analysis techniques. Our idea is to equivalently
 reformulate the feasibility problems as parametric generalized equations to which set-valued and variational analysis techniques apply. Sufficient conditions, as well as necessary conditions, for the Lipschitz-likeness of the involved solution maps are proved by exploiting special structures of the problems and by using an advanced result of B.S. Mordukhovich [J. Global Optim. 28, 347--362 (2004)]. These conditions stand on a solid interaction among all the input data by means of their dual counterparts, which are transposes of matrices and regular/limiting normal cones to sets. Several examples are presented to illustrate how the obtained results work in practice and also show that the existence of nonzero solution assumption made in the necessity conditions cannot be lifted.

\noindent {\bf Keywords:} Split feasibility problem, split equality problem, stability analysis, Lipschitz-likeness, parametric generalized equation.

\noindent {\bf 2020 Mathematics Subject Classification:}  49J53, 49K40, 65K10, 90C25, 90C31.

\end{quote}

\section{Introduction}\label{Sect_Introduction} \setcounter{equation}{0}
 Let two nonempty closed convex sets $C \subset \mathbb R^n$ and $Q \subset \mathbb R^ m$ and a matrix $A \in \mathbb R^{m\times n}$ be given.
As a feasibility problem, the \textit{split feasibility problem} (SFP) is to find a point $x$ with the property
$$x \in C \quad  \mbox{and}  \quad Ax\in Q.$$
This problem was introduced by Censor and Elfving~\cite[Section~6]{Censor_Elfving_94} in 1994 to model phase retrieval and other image restoration problems in signal processing. The model has been received a lot of attention, due to its wide range of applications in diverse applied areas such as  signal processing/imaging reconstruction~\cite{Byrne02,Byrne04,Qu_Liu_18,Shehu_etal_19}, medical treatment of intensity-modulated radiation therapy~\cite{Censor_etal_05,Censor_etal_06,Censor_etal_08,Censor_etal_10}, and gene regulatory network inference~\cite{Wang_etal17}.

Another class of feasibility problems, referred to as the
\textit{split equality problem} (SEP), is to find a pair of points $x, y$ with the property
$$x \in C,\  \,  y \in Q \quad  \mbox{and}  \quad Ax = By,$$
where again $C\subset \mathbb{R}^n$ and $Q \subset \mathbb{R}^m$ are given nonempty closed convex sets, and $A\in \mathbb{R}^{l\times n}$ and $B\in\mathbb{R}^{l \times m}$ are given matrices. This extension of (SEP)  was proposed by Moudafi and his coauthors~\cite{Moudafi_Shemas13,Moudafi13,Moudafi14,Byrne_Moudafi_17}. The study of (SEP) was motivated by the applications in decomposition methods for PDEs, game theory, and intensity-modulated radiation therapy; see Moudafi~\cite{Moudafi13}. Clearly, when $l = m$ and $B$ is the identity matrix in $\mathbb{R}^{m \times m}$, (SEP) recovers (SFP). Conversely, (SEP) can be converted to a special case of (SFP) via a product space approach (see, e.g.,~\cite[Section~3]{Xu_Celgieski21}).

During the last nearly three decades, many efforts have been made to design solution algorithms for (SFP) (see~\cite{Censor_Elfving_94,Byrne02,Byrne04,Qu_Liu_18,Shehu_etal_19,Wang_etal17,Xu06,Censor_etal_07,Xu10,Lopez_etal_12,Dong_et_al18,Chen_et_al20,Dong_et_al21,Che_et_al22} and the references therein), (SEP) (see, for instance,~\cite{Moudafi_Shemas13,Moudafi13,Moudafi14,Byrne_Moudafi_17,Vuong_etal_15,Dong_etal_15,Reich_Tuyen_21,Xu_Celgieski21}), and related problems (see, e.g.,~\cite{He_etal_16,HeXu17,Long_etal_19,YenLH_etal_19}). In many papers, (SFP) and (SEP) are reformulated as constrained optimization problems with smooth objectives. This technique allows one to effectively exploit advanced tools from optimization theory and operator machinery.

In practical applications, data of a mathematical model in question might be given approximately or undergo perturbations. Thus, studying the \textit{stability of solutions} with respect to small changes of data is an important topic. As an example, let $\bar x$ be a solution of the linear equation $Ax = b$, which is a special simple form of (SFP) with $C= \R^{n}$ and $Q=\{b\}$. When $(A, b)$ is slightly perturbed to $(A', b')$, the following questions arise in a natural way (see, e.g.,~\cite[Appendix A]{Rusz06} and \cite{Robinson75}):
\begin{itemize}
\item[{\rm (i)}] \textit{Does the perturbed equation $A'x = b'$ still possess a solution}?
\item[{\rm (ii)}] \textit{If such solutions exist, whether there is a solution $x'$ ``near" $\bar x$}?
\end{itemize}
  These kinds of concerns lead to the concepts of ``\textit{stable}" and  ``\textit{well-posed}" problems, in contrast with ``\textit{ill-possed}" problems; see Tikhonov and Arsenin~\cite[Chapter~1]{Tikhonov_Arsenin_77}. For ill-posed problems, regularization methods are applied to produce approximate solutions~\cite[Chapter~2]{Tikhonov_Arsenin_77}. Meanwhile, if a problem is enough stable, one can design effective solution algorithms with fast convergence rates. This is the case of the Newton method applied to generalized equations~\cite{Dontchev96}, where the related set-valued map is Lipschitz-like at the reference point in its graph. More details about interrelations between stability properties and solution methods for generalized equations, optimization problems, linear programs can be found in the recent papers~\cite{Klatte_Kummer_09,Canovas_etal_16,Wang_etal_22}.

However, to the best of our knowledge, there has been no study on stability analysis on solutions of (SFP) and (SEP) so far,
 and it is the purpose of this paper to study this gap. More precisely,
 we shall study the stability of solutions of (SEP) and (SFP) when the matrices $A$ and $B$ are perturbed. Define the solution map of (SEP) as the set-valued map $S: \mathbb{R}^{l\times n} \times \mathbb{R}^{l\times m} \rightrightarrows \mathbb{R}^n \times \mathbb{R}^m$ by
  \begin{equation*}
  	S (A, B):= \{(x, y)  \in C \times Q \, : \, Ax=By\}, \quad (A, B) \in \mathbb{R}^{l\times n} \times \mathbb{R}^{l\times m}
  \end{equation*}
and the solution map of (SFP) as the map $S_1(\cdot): \mathbb R^{m \times n} \rightrightarrows \mathbb R^n$ by
\begin{equation*}
	S_1(A):=\{x \in C \, : \, Ax \in Q\}, \quad A \in \mathbb R^{m\times n}.
\end{equation*}
Several basic properties of the set-valued maps $S(\cdot)$ and $S_1(\cdot)$ are deserved considerations; these include upper semicontinuity, lower semicontinuity, inner semicontinuity, continuity, calmness, Lipschitz-likeness, Lipschitz continuity (see, e.g,~\cite{Aubin_Frank_90,B-M06} and \cite[Section~3.1]{Mordukhovich_2018} for the roles of these concepts in set-valued and variational analysis). \textit{We will focus on the Lipschitz-likeness property of $S(\cdot)$ and $S_1(\cdot)$, a robust and maybe the most significant property among the just mentioned ones.}

Let $\Phi: X \rightrightarrows  Y$ be a set-valued map between normed spaces $X, Y$ and let $(\bar x, \bar y) \in X \times Y$ be such that $\bar y \in \Phi (\bar x)$. One says that $\Phi$  is  \textit{Lipschitz-like} (\textit{pseudo-Lipschitz}, or has the \textit{Aubin property}) at $(\bar x, \bar y)$ if there exist neighborhoods $U$ of $\bar x$, $V$ of $\bar y$, and constant $\ell > 0$ such that
\begin{align*}
	\Phi(x') \cap V \subset \Phi (x) + \ell \| x' - x\| \Bar {\B}_{\mathbb{R}^m}, \quad \forall x',\, x \in U,
\end{align*}
where $\Bar {\B}_{\mathbb{R}^m}$ signifies the closed unit ball in $\mathbb{R}^m$. The concept was originally introduced by Aubin~\cite{Aubin84} in 1984 to study the stability of the solution set of a convex optimization problem. Since then, it has played a very important role in optimization theory and variational analysis; see, for instance, the comments in the recent papers~\cite{Huyen_Yen16,Li_Ng_18}. Clearly, the Lipschitz-likeness of $\Phi$ at $(\bar x, \bar y)$ implies the following:
\begin{itemize}
\item[{\rm (a)}] For every $x$ ``\textit{near}'' $\bar x$ (to be made more precise later, $x \in U$), the set $\Phi (x)$ is nonempty;
\item[{\rm (b)}] For every $x$ ``\textit{near}'' $\bar x$, there exists $y \in \Phi(x)$ such that the distance between $y$ and $\bar y$ is not greater than $\ell$ times the distance between $x$ and $\bar x$. This means that $y$ is ``\textit{near}" $\bar y$ and the distance is quantified by the bound $\ell \|x - \bar x\|$;
\item[{\rm (c)}] $\Phi$ is inner semicontinuous at $(\bar x,\bar y)$ (see~\cite[p.~71]{B-M06} and~\cite[Proposition~3.1]{Yen87}).
\end{itemize}
Thus, among other things, the Lipschitz-likeness property of $S(\cdot)$ and $S_1(\cdot)$ provides us with affirmative answers to the fundamental questions of the above-mentioned types (i) and (ii) on the solution stability of (SEP) and (SFP).

The Lipschitz-likeness of a set-valued map can be studied by various tools from variational analysis such as metric regularity, generalized derivatives and coderivatives. We choose the approach involving the limiting coderivative in Mordukhovich's theory to investigate the Lipschitz-likeness of $S(\cdot)$ and $S_1(\cdot)$. Namely, we represent the solution map of (SEP) as the one of a parametric generalized equation. Then, thanks to the special structure of the problem and the fundamental result of Mordukhovich~\cite[Theorem 4.2(ii)]{Mor04}, we are able to establish a sufficient condition and a necessary condition for the Lipschitz-likeness of~$S(\cdot)$. The characterization (see condition~\eqref{Lipchitz-like_S_con} in the next section) expresses a solid interaction among all the input data, by means of their \textit{dual} counterparts: transposes of matrices and normal cones to sets.  For the map $S_1(\cdot)$, a bit different approach is needed. The sufficiency for the Lipschitz-likeness of  $S_1(\cdot)$ is established from the one of $S(\cdot)$ by making use of the close relationship between (SFP) and (SEP). This technique, however, does not work for the necessity assertion. The reason lies in the incompatibility between the domains of the maps $S(\cdot)$ and $S_1(\cdot)$. We, therefore, again lean on \cite[Theorem 4.2(ii)]{Mor04} to establish the necessity (condition~\eqref{Lipchitz-like_S1_con} in Section~\ref{SFP}) for the Lipschitz-likeness of  $S_1(\cdot)$. Several examples have been designed to illustrate how the obtained results (Theorems~\ref{Lipchitz-like_S_thm} and \ref{Lipchitz-like_S1_thm}) work in practice and
also show that the existence of nonzero solution assumption made in the necessity conditions cannot be lifted.

The rest of the paper is structured as follows. In Section~\ref{Sect_Preliminaries}, several concepts and tools from set-valued and variational analysis are recalled. Our main results on the Lipschitz-likeness of the solution maps $S(\cdot)$ and $S_1(\cdot)$ are presented in Sections~\ref{SEP} and~\ref{SFP}, respectively. Finally, a summary is included in Section \ref{Sect_Conclusions}.

Throughout the paper, if $M\in\mathbb R^{q\times p}$ is a matrix, then $M^{\rm T}$ stands for the transpose of $M$. The kernel is defined by ${\rm ker}\, M=\big\{ x\in\mathbb R^p\,:\, Mx=0\big\}$. The inverse of a set $\Omega\subset\mathbb R^q$ via the operator $M:\mathbb R^p\to\mathbb R^q$  is defined by $M^{-1}(\Omega)=\big\{ x\in\mathbb R^p\,:\, Mx\in\Omega\big\}$. The space of the linear operators from $\mathbb R^p$ to $\mathbb R^q$ is denoted by $L\big(\mathbb R^p,\mathbb R^q\big)$. By ${\rm int}\,\Omega$ we denote the topological interior of $\Omega$. Let  ${\B}(\bar y,\rho)$ (resp., $\Bar {\B}(\bar y,\rho)$) stand for the open ball (resp., the closed ball) with center $\bar y\in\mathbb R^q$ and radius $\rho>0$. The closed unit ball in $\mathbb{R}^q$ is abbreviated as $\Bar{\B}_{\mathbb{R}^q}$.

\section{Preliminaries}\label{Sect_Preliminaries} \setcounter{equation}{0}
Some basic concepts and tools from set-valued and variational analysis~\cite{B-M06,Mordukhovich_2018}, which will be needed in the sequel, are recalled in this section.

Let  $\Phi:\mathbb R^n\rightrightarrows  \mathbb R^m$ be a set-valued map. The \textit{graph} of $\Phi$ is the set $$\gph \Phi:=\big\{(x,y)\in\mathbb R^n\times\mathbb R^m\,:\, y\in\Phi(x)\big\}.$$
We say that $\Phi$ has \textit{closed graph} if $\gph \Phi$ is closed in the product space $\mathbb R^n\times\mathbb R^m$, which is endowed with the norm $\|(x,y)\|:=\|x\|+\|y\|$ for all $(x,y)\in\mathbb R^n\times\mathbb R^m$. For any $(\bar x,\bar y)\in\gph\Phi$, one says that $\Phi$ is {\it locally closed} around $(\bar x,\bar y)$ if there exists $\rho>0$ such that $\big(\gph\Phi\big)\cap \Bar {\B}((\bar x, \bar y),\rho)$ is closed in $\mathbb R^n\times\mathbb R^m$.  If $\Phi$ has closed graph, then it is locally closed around any point in its graph.

\medskip
Let $\Omega$ be a nonempty subset of $\mathbb{R}^n$ and $\bar x \in \Omega$. The {\it regular normal cone} (or \textit{Fr\'echet normal cone}) to $\Omega$ at $\bar x$ is defined by
\begin{align*}
	\widehat N(\bar x; \Omega)=\Big\{ x'\in \mathbb{R}^n \, :\, \limsup\limits_{x \xrightarrow{\Omega}\bar x} \dfrac{\langle x', x-\bar x \rangle}{\|x-\bar x\|} \leq 0 \Big\},
\end{align*}
where $x \xrightarrow{\Omega} \bar x$ means that $x \rightarrow \bar x$ and $ x\in \Omega$. The {\it limiting normal cone} (or {\it Mordukhovich normal cone}) to $\Omega$ at $\bar x$ is given by
\begin{eqnarray*}\begin{array}{rl}
N(\bar x;\Omega)=\big\{x'\in \mathbb{R}^n\, :\,  \exists \mbox{ sequences } x_k\to \bar x,\ x_k'\rightarrow x'
	\quad \mbox {with } x_k'\in \widehat
		N(x_k;\Omega)\, \mbox{ for all }\, k=1,2,\dots\big\}.\end{array}\end{eqnarray*}
We put $\widehat N(\bar x; \Omega)=N(\bar x; \Omega) =\emptyset$ if $\bar x \not\in \Omega$. One has $\widehat N(\bar x; \Omega) \subset N(\bar x; \Omega)$ for all $\Omega \subset \mathbb{R}^n$ and $\bar x \in \Omega$.  By~\cite[Prop. 1.7]{Mordukhovich_2018}), if  $\Omega$ is convex, then the regular normal cone and the limiting normal cone to $\Omega$ at $\bar x\in\Omega$ coincide, and both cones reduce to the \textit{normal cone in the sense of convex analysis}, i.e.,
\begin{align*}
	\widehat{N}(\bar x; \Omega)= {N}(\bar x; \Omega)=\big\{x'\in \mathbb{R}^n \, :\, \langle x', x-\bar x \rangle \le 0, \ \forall x\in\Omega\big\}.
\end{align*}

Let $(\bar x,\bar y)$ belong to the graph of a set-valued map $\Phi:\mathbb R^n\rightrightarrows  \mathbb R^m$. The set-valued map $D^*\Phi(\bar x,\bar y):\mathbb R^m\rightrightarrows \mathbb R^n$ defined by
\begin{eqnarray*}
		D^*\Phi(\bar x,\bar y)(y'):=\big\{x'\in \mathbb R^n\, :\, (x',-y')\in N((\bar x,\bar y);\mbox{gph}\,\Phi)\big\},\quad y'\in\mathbb{R}^m
\end{eqnarray*}
 is called the {\it limiting coderivative} (or {\it Mordukhovich coderivative}) of the set-valued map $\Phi$ at $(\bar x,\bar y)$.  When $\Phi$ is single-valued and $\bar y=\Phi(\bar x)$, we write $D^*\Phi(\bar x)$ for $D^*\Phi(\bar x,\bar y)$. If $\Phi:\mathbb R^n\to \mathbb R^m$ is \textit{strictly differentiable} \cite[p.~19]{B-M06} at $\bar x$ (in particular, if $\Phi$ is continuously Fr\'echet differentiable in a neighborhood of $\bar x$) with derivative $\nabla\Phi(\bar x)$, then
\begin{eqnarray*}
	 D^*\Phi(\bar x)(y')=\big\{\nabla\Phi(\bar x)^*y'\big\}, \quad \forall	y'\in\mathbb R^m.
 \end{eqnarray*}
Here, the {\it adjoint operator} $\nabla\Phi(\bar x)^*$ of $\nabla\Phi(\bar x)$ is defined by setting
$$\langle\nabla\Phi(\bar x)^*y',x\rangle=\langle y',\nabla\Phi(\bar x)x\rangle$$
for every $x\in\mathbb R^n$ (see \cite[Theorem 1.38]{B-M06} for more details). In accordance with the definition given at~\cite[p.~351]{Mor04}, one says that $\Phi$ is \textit{graphically regular} at $(\bar x,\bar y)$ if $$N((\bar x,\bar y);\mbox{gph}\,\Phi)=\widehat{N}((\bar x,\bar y);\mbox{gph}\,\Phi).$$

 Recall that a set-valued map $\Phi:\mathbb R^n\rightrightarrows  \mathbb R^m$ is \textit{Lipschitz-like} (\textit{pseudo-Lipschitz}, or has the \textit{Aubin property}; see \cite{Aubin84}) at $(\bar x, \bar y) \in \gph\Phi$ if there exist neighborhoods $U$ of $\bar x$, $V$ of $\bar y$, and constant $\ell > 0$ such that
\begin{align*}
	\Phi(x') \cap V \subset \Phi (x) + \ell \| x' - x\| \Bar {\B}_{\mathbb{R}^m}, \quad \forall x',\, x \in U.
\end{align*}

Let $X, Y, Z$ be finite-dimensional spaces.  Consider a \textit{parametric generalized equation}
\begin{equation}\label{para. generalized equa. def.}
	0\in f(x, y)+G(x, y)
\end{equation}
with the decision variable $y$ and the parameter $x$, where $f:X\times Y\rightarrow Z$ is a single-valued map, and $G: X\times Y\rightrightarrows Z$ is a set-valued map. The \textit{solution map} of \eqref{para. generalized equa. def.} is the set-valued map $S: X \rightrightarrows Y$ defined by
\begin{equation}\label{solution map def.}
	S(x):=\left\{y\in Y \;:\; 0\in f(x, y)+G(x, y)\right\}, \quad x\in X.
\end{equation}

The next theorem, which is a special case of~\cite[Theorem 4.2(ii)]{Mor04} states a necessary and sufficient condition for the Lipschitz-likeness property of the solution map $S$ defined in \eqref{solution map def.}.

\begin{theorem}{\rm (See~\cite[Theorem 4.2(ii)]{Mor04})}\label{Thm. 4.2-Mor-2004-JOGO}
	Let $X, Y, Z$ be finite-dimensional spaces and let $(\bar x, \bar y)$ satisfy \eqref{para. generalized equa. def.}. Suppose that $f$ is strictly differentiable at $(\bar x, \bar y)$, $G$ is locally closed around  $(\bar x, \bar y,\bar z)$ with $\bar z:=-f(\bar x, \bar y)$ and, moreover, $G$ is graphically regular at $(\bar x, \bar y,\bar z)$. If
	\begin{equation}\label{Lipschitz-like condition}
		\big [(x', 0)\in \nabla f(\bar x, \bar y)^*(z')+D^*G(\bar x, \bar y, \bar z)(z') \big]\Longrightarrow [x'=0,\, z'=0],
	\end{equation} then $S$ is Lipschitz-like at $(\bar x, \bar y)$. Conversely, if $S$ is Lipschitz-like at $(\bar x, \bar y)$ and if the regularity condition
	\begin{equation}\label{regularity condition}
		\big [(0,0)\in \nabla f(\bar x, \bar y)^*(z')+D^*G(\bar x, \bar y, \bar z)(z') \big]\Longrightarrow [z'=0]
\end{equation}
	is satisfied, then one must have~\eqref{Lipschitz-like condition}.	
\end{theorem}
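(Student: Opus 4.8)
The plan is to reduce the statement to two ingredients: Mordukhovich's coderivative criterion for the Aubin property, and an exact coderivative formula for the solution map $S$. Recall the criterion: a set-valued map that is locally closed around a graph point $(\bar x, \bar y)$ is Lipschitz-like there if and only if its limiting coderivative satisfies $D^*S(\bar x, \bar y)(0) = \{0\}$. I would first dispose of local closedness. Since $\gph S = \{(x,y) : -f(x,y) \in G(x,y)\}$ is the preimage of $\gph G$ under the continuous map $(x,y)\mapsto (x,y,-f(x,y))$, the local closedness of $G$ around $(\bar x, \bar y,\bar z)$ together with continuity of $f$ makes $\gph S$ locally closed around $(\bar x, \bar y)$. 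This lets the criterion be applied to $S$ in both directions.

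The technical core is the exact coderivative formula
\begin{equation*}
  D^*S(\bar x, \bar y)(y') = \big\{x' : (x',-y') \in \nabla f(\bar x, \bar y)^* z' + D^*G((\bar x, \bar y), \bar z)(z') \ \text{for some}\ z'\big\},
\end{equation*}
valid whenever the qualification condition \eqref{regularity condition} holds. To obtain it, write $\mathcal G := f + G$, so that $(x,y)\in\gph S \iff (x,y,0)\in\gph\mathcal G$. Strict differentiability of $f$ gives the exact sum rule $D^*\mathcal G((\bar x,\bar y),0)(z') = \nabla f(\bar x, \bar y)^* z' + D^*G((\bar x, \bar y), \bar z)(z')$, and since $\gph\mathcal G$ is the image of $\gph G$ under the diffeomorphism $(x,y,v)\mapsto(x,y,v+f(x,y))$, graphical regularity of $G$ is inherited by $\mathcal G$ at $((\bar x, \bar y),0)$. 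Passing from $\gph\mathcal G$ to $\gph S$ amounts to intersecting with the subspace $L = \mathbb R^n\times\mathbb R^m\times\{0\}$ and projecting off the last coordinate; the normal-cone intersection rule here carries exactly the qualification $N((\bar x,\bar y,0);\gph\mathcal G)\cap L^\perp=\{0\}$, and a short sign computation shows this is precisely \eqref{regularity condition}. Graphical regularity is what upgrades the generally valid inclusions of the sum and intersection rules to equalities, delivering the displayed formula.

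With the formula in hand, the two implications follow by bookkeeping. For sufficiency I would first note that \eqref{Lipschitz-like condition} already forces \eqref{regularity condition}: if $(0,0)\in \nabla f(\bar x, \bar y)^* z' + D^*G((\bar x, \bar y), \bar z)(z')$, then taking $x'=0$ in \eqref{Lipschitz-like condition} yields $z'=0$. Hence the exact formula applies, and evaluating it at $y'=0$ together with \eqref{Lipschitz-like condition} gives $D^*S(\bar x, \bar y)(0)=\{0\}$, so the criterion yields the Aubin property. For necessity, the Aubin property gives $D^*S(\bar x, \bar y)(0)=\{0\}$ via the criterion; since \eqref{regularity condition} is now assumed, the exact formula at $y'=0$ forces $x'=0$ whenever $(x',0)\in \nabla f(\bar x, \bar y)^* z' + D^*G((\bar x, \bar y), \bar z)(z')$, and feeding $x'=0$ back into \eqref{regularity condition} then yields $z'=0$, which is \eqref{Lipschitz-like condition}.

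I expect the main obstacle to be the exact coderivative formula itself, specifically verifying that the intersection/preimage rule for limiting normal cones holds with equality rather than mere inclusion. This is where all three hypotheses interact: strict differentiability makes the sum rule exact; graphical regularity equates the regular and limiting normal cones, so the calculus rules are sharp; and the qualification condition \eqref{regularity condition} is exactly what licenses the intersection rule and controls the multiplier coordinate $z'$. The sign conventions relating $D^*S$, the normal cone to $\gph S$, and the embedding into $\gph\mathcal G$ must be tracked with care, but they become routine once the geometric picture is fixed, namely intersect $\gph\mathcal G$ with $L$ and project off the $Z$-coordinate.
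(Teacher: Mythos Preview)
The paper does not prove this theorem at all: it is quoted in the Preliminaries section as a known result from Mordukhovich~\cite[Theorem 4.2(ii)]{Mor04} and is used as a black box in Sections~\ref{SEP} and~\ref{SFP}. There is therefore no proof in the paper to compare your proposal against.

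That said, your sketch is a faithful outline of how the result is established in Mordukhovich's framework. The ingredients you isolate---the coderivative criterion $D^*S(\bar x,\bar y)(0)=\{0\}$ for the Aubin property, the exact sum rule $D^*(f+G)=\nabla f^*+D^*G$ furnished by strict differentiability, and the passage from $\gph(f+G)$ to $\gph S$ via intersection with $X\times Y\times\{0\}$ under the qualification~\eqref{regularity condition}---are precisely the ones needed. Your observation that~\eqref{Lipschitz-like condition} already entails~\eqref{regularity condition}, so that the exact formula is available in the sufficiency direction as well, is correct and important. The one place where more care is warranted is the necessity direction: the upper estimate for $D^*S$ holds under~\eqref{regularity condition} alone, but the \emph{lower} estimate (i.e., equality) genuinely requires graphical regularity of $G$, which you invoke but should emphasize is indispensable---without it the converse implication can fail.
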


\noindent{\em Remark.}
Observe that the characterization~\eqref{Lipschitz-like condition} and the regularity condition~\eqref{regularity condition} are described in terms of the limiting coderivative of the input data $f$ and $G$. The interested reader is referred to Subsection~4.2~[4B] in the book by Dontchev and Rockafellar~\cite{Dontchev_Rockafellar14} for an alternative approach using ``\textit{graphical derivative}'' criteria. If the set-valued map $G$  is constant (as it will be in the next two sections), one can consult~\cite[Theorem 4F.5]{Dontchev_Rockafellar14} for a simpler characterization of the Lipschitz-likeness of~$S$. However, there is no free lunch. Theorem~ 4F.5~\cite{Dontchev_Rockafellar14} requires the
\textit{ample parameterization condition}, which means that the partial derivative of the function $f$ with respect to the parameter~$x$ at the reference point is \textit{surjective}. Therefore, this theorem cannot be applied to the problems to be considered the next sections; see the functions in formulas~\eqref{f} and~$\eqref{f1}$ in Sections \ref{SEP} and \ref{SFP}, respectively.

\section{Split Equality Problems}\label{SEP}\setcounter{equation}{0}
The \textit{split equality problem} (SEP) given by two nonempty closed convex sets $C\subset \mathbb{R}^n$ and $Q \subset \mathbb{R}^m$, and two matrices $A\in \mathbb{R}^{l\times n}$ and $B \in \mathbb{R}^{l \times m}$ is to find a pair of points $(x, y)$ with the property:
$(x, y)\in C \times Q$ and $Ax = By$. Our purpose of this section is to establish some stability properties of (SEP). More precisely, we will focus on local continuity properties (including the Lipschitz-likeness) of the solution map $S: \mathbb{R}^{l\times n} \times \mathbb{R}^{l\times m} \rightrightarrows \mathbb{R}^n \times \mathbb{R}^m$ defined by
\begin{equation}\label{S}
S (A, B):= \{(x, y)  \in C \times Q \, : \, Ax=By\}, \quad (A, B) \in \mathbb{R}^{l\times n} \times \mathbb{R}^{l\times m}.
\end{equation}
Thus, the matrices $A$ and $B$ are subject to perturbations, while the constraint sets $C$ and $Q$ are fixed.

Our main result in this section, which gives sufficient and necessary conditions for the Lipschitz-likeness property of the solution map
$S$ defined in~\eqref{S} at a given point in its graph, reads as follows.

\begin{theorem}\label{Lipchitz-like_S_thm}
	Let $(\bar A, \bar B) \in \mathbb{R}^{l\times n} \times \mathbb{R}^{l\times m}$ and $(\bar x, \bar y) \in S (\bar A,\bar B)$. Suppose that
	\begin{equation}\label{Lipchitz-like_S_con}
		\big(\bar A^{\rm T}\big)^{-1}\big(-N(\bar x; C)\big) \cap \big(\bar B^{\rm T}\big)^{-1}\big(N(\bar y; Q)\big)  = \{0\}.
	\end{equation}
Then,  the solution map $S(\cdot)$ of {\rm (SEP)} is Lipschitz-like at $\big((\bar A, \bar B),(\bar x, \bar y)\big)$. Conversely, if $S(\cdot)$ is Lipschitz-like at $\big((\bar A, \bar B),(\bar x, \bar y)\big)$ and $(\bar x, \bar y) \neq (0, 0)$, then~\eqref{Lipchitz-like_S_con} holds.
\end{theorem}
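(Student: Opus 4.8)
The plan is to recast the constraint system defining $S(\cdot)$ as a parametric generalized equation of the form~\eqref{para. generalized equa. def.} \emph{with a constant set-valued part}, and then invoke Theorem~\ref{Thm. 4.2-Mor-2004-JOGO}. Taking the parameter to be $(A,B)$ and the decision variable to be $(x,y)$, I would put $Z=\R^l\times\R^n\times\R^m$ and define
$$f\big((A,B),(x,y)\big)=(Ax-By,\,-x,\,-y),\qquad G\equiv\{0\}\times C\times Q,$$
so that $0\in f+G$ is equivalent to the three requirements $Ax-By=0$, $x\in C$, $y\in Q$; hence the solution map of this generalized equation is exactly $S(\cdot)$. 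The hypotheses of Theorem~\ref{Thm. 4.2-Mor-2004-JOGO} are then routine: $f$ is a polynomial map, so it is strictly differentiable everywhere; $G$ is constant with closed convex values (as $C,Q$ are closed convex), so its graph is closed and $G$ is graphically regular at the reference point, while $\bar z:=-f(\bar A,\bar B,\bar x,\bar y)=(0,\bar x,\bar y)\in\{0\}\times C\times Q$ because $\bar A\bar x=\bar B\bar y$.

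The core of the argument is to translate the abstract conditions~\eqref{Lipschitz-like condition} and~\eqref{regularity condition} into the intersection condition~\eqref{Lipchitz-like_S_con}. First I would compute the adjoint derivative: for $z'=(\lambda,u,v)\in\R^l\times\R^n\times\R^m$,
$$\nabla f(\bar A,\bar B,\bar x,\bar y)^*(\lambda,u,v)=\big(\lambda\bar x^{\rm T},\,-\lambda\bar y^{\rm T},\,\bar A^{\rm T}\lambda-u,\,-\bar B^{\rm T}\lambda-v\big),$$
where the first two blocks lie in the parameter space and the last two in the decision space, the outer products arising from differentiating the bilinear term $Ax-By$ in the matrix arguments. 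For the constant map $G$ one has $D^*G(\cdots)(z')=\{0\}$ exactly when $-z'\in N(\bar z;\{0\}\times C\times Q)=\R^l\times N(\bar x;C)\times N(\bar y;Q)$, i.e. when $-u\in N(\bar x;C)$ and $-v\in N(\bar y;Q)$, and $D^*G(\cdots)(z')=\emptyset$ otherwise. Substituting into~\eqref{Lipschitz-like condition} and forcing the decision-space block to vanish gives $u=\bar A^{\rm T}\lambda$ and $v=-\bar B^{\rm T}\lambda$, so the premise reduces to the existence of $\lambda$ with $\bar A^{\rm T}\lambda\in-N(\bar x;C)$ and $\bar B^{\rm T}\lambda\in N(\bar y;Q)$, that is, $\lambda\in(\bar A^{\rm T})^{-1}(-N(\bar x;C))\cap(\bar B^{\rm T})^{-1}(N(\bar y;Q))$. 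The implication in~\eqref{Lipschitz-like condition} (forcing $z'=0$, whence automatically $x'=0$) then reads precisely as~\eqref{Lipchitz-like_S_con}, and the sufficiency half of Theorem~\ref{Thm. 4.2-Mor-2004-JOGO} delivers the Lipschitz-likeness of $S(\cdot)$.

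For the converse I would apply the second half of Theorem~\ref{Thm. 4.2-Mor-2004-JOGO}, for which it suffices to verify the regularity condition~\eqref{regularity condition}. Repeating the computation but now forcing \emph{both} the parameter- and decision-space blocks of $\nabla f^*(z')$ to vanish adds the equations $\lambda\bar x^{\rm T}=0$ and $\lambda\bar y^{\rm T}=0$. This is exactly where the standing assumption $(\bar x,\bar y)\neq(0,0)$ enters: if, say, $\bar x\neq0$, then $\lambda\bar x^{\rm T}=0$ forces $\lambda=0$, and then $u=\bar A^{\rm T}\lambda=0$ and $v=-\bar B^{\rm T}\lambda=0$, so $z'=0$; thus~\eqref{regularity condition} holds automatically (and symmetrically if $\bar y\neq0$). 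Consequently, Lipschitz-likeness of $S(\cdot)$ forces~\eqref{Lipschitz-like condition}, hence~\eqref{Lipchitz-like_S_con}.

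The main obstacle I anticipate is not conceptual but a matter of careful bookkeeping: correctly identifying the adjoint $\nabla f^*$ in the mixed matrix/vector variables---in particular the outer-product terms $\lambda\bar x^{\rm T},\lambda\bar y^{\rm T}$ and the transposes $\bar A^{\rm T},\bar B^{\rm T}$---and tracking the sign introduced by the $-By$ term, which is what makes~\eqref{Lipchitz-like_S_con} asymmetric ($-N(\bar x;C)$ against $+N(\bar y;Q)$). A second subtlety, decisive for the necessity part, is recognizing that the vanishing of $\lambda\bar x^{\rm T}$ and $\lambda\bar y^{\rm T}$ is precisely what the nonzero-solution hypothesis converts into $\lambda=0$; without that hypothesis the regularity condition~\eqref{regularity condition} can genuinely fail, which is exactly why the assumption cannot be lifted.
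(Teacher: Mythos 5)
Your proposal is correct and follows essentially the same route as the paper: the identical reformulation of (SEP) as a generalized equation $0\in f+G$ with a constant set-valued part $\{0\}\times C\times Q$ (the paper merely orders the components as $(-x,-y,Ax-By)$), the same appeal to Theorem~\ref{Thm. 4.2-Mor-2004-JOGO}, and the same translation of conditions~\eqref{Lipschitz-like condition} and~\eqref{regularity condition} into the intersection condition~\eqref{Lipchitz-like_S_con}. The only cosmetic difference is that by writing out the parameter block $\big(\lambda\bar x^{\rm T},-\lambda\bar y^{\rm T}\big)$ of $\nabla f^*$ explicitly you get for free the rank-one matrices $A',B'$ that the paper constructs by hand in Claim~1 of its necessity argument, and you verify the regularity condition directly rather than via the paper's surjectivity-of-$\nabla f$ lemma.
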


\begin{remark}{\rm Clearly, if either $(\bar A^{\rm T})^{-1}(-N(\bar x; C)) =\{0\}$ or $(\bar B^{\rm T})^{-1}(N(\bar y; Q))=\{0\}$, then condition~\eqref{Lipchitz-like_S_con} is satisfied. So, if either $\bar x \in \mbox{int}\, C$ and $\ker \bar A^{\rm T} = \{0\}$ or $\bar y \in \mbox{int}\, Q$ and $\ker \bar B^{\rm T} = \{0\}$, then~\eqref{Lipchitz-like_S_con} holds true.}
\end{remark}

As an illustration for the applicability of the first assertion of Theorem~\ref{Lipchitz-like_S_thm}, let us consider the next example.

\begin{Example}{\rm Choose $l=m=n=1$, $C= [-1, 1]$, and $Q=[0, +\infty)$. Besides, let $\bar A=(\bar\alpha)$ for some $\bar\alpha> 0$ and $\bar B= (1)$. Fix any $\bar x \in [0, 1]$ and let $\bar y= \bar\alpha \bar x$. It is easy to verify that $(\bar x, \bar y) \in S(\bar A, \bar B)$. If $\bar x\in [0, 1)$, then $\bar x \in \mbox{int}\, C$. Since $\ker \bar A^{\rm T} = \{0\}$, this implies that the condition ~\eqref{Lipchitz-like_S_con} is satisfied. Therefore, by Theorem~\ref{Lipchitz-like_S_thm}, $S(\cdot)$ is Lipschitz-like at $\big((\bar A, \bar B), (\bar x, \bar y)\big)$. If $\bar x = 1$, then $\bar y = \bar\alpha\in \mbox{int}\, Q$. As $\ker \bar B^{\rm T} = \{0\}$, the latter implies that~\eqref{Lipchitz-like_S_con} is fulfilled. Hence we get the Lipschitz-likeness of $S(\cdot)$ at $\big((\bar A, \bar B), (\bar x, \bar y)\big)$ by Theorem~\ref{Lipchitz-like_S_thm}.}
\end{Example}

The following example is designed to show how the second assertion of Theorem~\ref{Lipchitz-like_S_thm} can be useful for analyzing  concrete problems.

\begin{Example}{\rm Consider the problem (SEP) with $l=m=n=1$, $C= [-1, 1]$, and $Q=[0, +\infty)$. Let $\bar A=(0)$, $\bar B= (1)$. Take any $\bar x \in [-1, 1]$ and put $\bar y =0$. Then, one has $(\bar x, \bar y) \in S(\bar A, \bar B)$. Note that $\big(\bar A^{\rm T}\big)^{-1}\big(-N(\bar x; C)\big) = \mathbb R$ while $$\big(\bar B^{\rm T}\big)^{-1}\big(N(\bar y; Q)\big) = N(\bar y; Q)  = (-\infty, 0].$$ Thus, $$\big(\bar A^{\rm T}\big)^{-1}\big(-N(\bar x; C)\big) \cap \big(\bar B^{\rm T}\big)^{-1}\big(N(\bar y; Q)\big)  = (-\infty, 0].$$
Consequently, the condition~\eqref{Lipchitz-like_S_con} is invalid. Therefore, by Theorem~\ref{Lipchitz-like_S_thm} we can infer that $S(\cdot)$ is \textit{not} Lipschitz-like at $\big((\bar A, \bar B), (\bar x, \bar y)\big)$, provided that $\bar x \neq 0$.}
This shows that the nonzero solution (i.e., $(\bar x, \bar y) \neq (0, 0)$) assumption in the necessity condition of Theorem \ref{Lipchitz-like_S_thm} can't be lifted.
\end{Example}

 Our main idea of the proof of Theorem~\ref{Lipchitz-like_S_thm} is to transform (SEP) to a special type of generalized equations \eqref{para. generalized equa. def.} to which the fundamental result recalled in Theorem~\ref{Thm. 4.2-Mor-2004-JOGO} applies. We proceed as follows.

Let $W = \mathbb{R}^{l\times n} \times \mathbb{R}^{l\times m}$, $U = \mathbb{R}^n \times \mathbb{R}^m$, and $V= \mathbb{R}^n \times \mathbb{R}^m \times \mathbb{R}^l$. Consider the function $f: W\times U \to V$ given by
\begin{equation}\label{f}
	f(w, u):= (-x, -y, Ax - By), \quad w=(A, B) \in  W,\ u=(x, y) \in U
\end{equation}
and the set-valued map $G: W\times U \rightrightarrows V$ with
\begin{equation}\label{G}
	G(w, u):= C \times Q \times \{0_{\mathbb{R}^l}\}, \quad w=(A, B) \in  W,\  u=(x, y) \in U.
\end{equation}
Then, the solution map $(A,B)\mapsto S(A,B)$ of (SEP) coincides with the solution map of the generalized equation
$0 \in f(w, u) + G(w, u)$, i.e., one can write
\begin{equation}\label{S_solution map}
	S(w) = \left\{u \in U \; : \; 0 \in f(w, u) + G(w, u)\right\}, \quad w \in W.
\end{equation}

We will need two lemmas related to~\eqref{S_solution map}: the first one establishes  some properties of the function $f$ and the second one gives a coderivative formula for the set-valued map $G$.

\begin{lemma}\label{lemma_f}
The function $f: W\times U \to V$ in~\eqref{f} is strictly differentiable at any point $(\bar w, \bar u) \in W \times U$ with $\bar w = (\bar A, \bar B)$ and $\bar u =(\bar x, \bar y)$. The derivative $\nabla f(\bar w, \bar u) : W \times U \to V$  of $f$ at $(\bar w, \bar u)$ is given by
\begin{equation}\label{deri_f}
\nabla f(\bar w, \bar u)(w, u) = (-x, -y, \bar A x - \bar B y + A \bar x - B \bar y),\ \,  w = (A, B) \in W,\ u =(x, y) \in U.
\end{equation}
In addition, if $\bar u \neq (0, 0)$, then  the operator $\nabla f(\bar w, \bar u) : W \times U \to V$ is surjective and thus its adjoint operator $\nabla f(\bar w, \bar u)^* : V \to W \times U$ is injective.
\end{lemma}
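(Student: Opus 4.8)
The plan is to dispatch the three assertions of the lemma in turn, exploiting the fact that $f$ is assembled from linear and bilinear pieces. First I would observe that each coordinate of $f$ is polynomial in the entries of $(A,B,x,y)$: the first two components $(w,u)\mapsto -x$ and $(w,u)\mapsto -y$ are linear, while the third component $(w,u)\mapsto Ax-By$ is \emph{bilinear}. Hence $f$ is of class $C^\infty$, so in particular it is continuously \F\ differentiable on all of $W\times U$; as noted in Section~\ref{Sect_Preliminaries}, continuous \F\ differentiability implies strict differentiability, which settles the first assertion. To obtain formula~\eqref{deri_f}, I would expand $f(\bar w+w,\bar u+u)$ about $(\bar w,\bar u)$. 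The two linear components contribute $(-x,-y)$, and for the bilinear term one has
\begin{equation*}
(\bar A+A)(\bar x+x)-(\bar B+B)(\bar y+y)=(\bar A\bar x-\bar B\bar y)+(\bar A x-\bar B y+A\bar x-B\bar y)+(Ax-By),
\end{equation*}
where the last group $Ax-By$ is quadratic in $(w,u)$ and hence is $o(\|(w,u)\|)$. Reading off the first-order part yields exactly~\eqref{deri_f}.

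For the surjectivity claim I would fix an arbitrary target $(p,q,r)\in V=\mathbb{R}^n\times\mathbb{R}^m\times\mathbb{R}^l$ and look for $(w,u)=(A,B,x,y)$ with $\nabla f(\bar w,\bar u)(w,u)=(p,q,r)$. The first two coordinates force $x=-p$ and $y=-q$, and substituting these into the third coordinate reduces the task to finding matrices $A$ and $B$ with $A\bar x-B\bar y=s$, where $s:=r+\bar A p-\bar B q\in\mathbb{R}^l$ is a fixed vector. This is precisely where the hypothesis $\bar u\neq(0,0)$ enters: at least one of $\bar x,\bar y$ is nonzero. If $\bar x\neq 0$, I would set $B=0$ and take the rank-one matrix $A:=\|\bar x\|^{-2}\,s\,\bar x^{\rm T}$, so that $A\bar x=\|\bar x\|^{-2}\,s\,(\bar x^{\rm T}\bar x)=s$; if instead $\bar y\neq 0$, I would set $A=0$ and $B:=-\|\bar y\|^{-2}\,s\,\bar y^{\rm T}$, giving $-B\bar y=s$. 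In either case a preimage exists, so $\nabla f(\bar w,\bar u)$ maps onto $V$.

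The injectivity of the adjoint is then immediate from the surjectivity by the standard finite-dimensional duality: for a linear operator $T$ between inner-product spaces one has $\ker T^*=({\rm range}\,T)^\perp$, so surjectivity of $T$ forces $\ker T^*=\{0\}$, that is, $T^*$ is injective. I do not expect any genuine obstacle in this lemma; the only step demanding care is the surjectivity argument, where the nonvanishing of $(\bar x,\bar y)$ is exactly what turns the map $M\mapsto Mv$ (with $v$ the nonzero component among $\bar x,\bar y$) into a surjection onto $\mathbb{R}^l$. This is also the point at which the assumption $\bar u\neq(0,0)$ is indispensable, mirroring the role it plays in the necessity part of Theorem~\ref{Lipchitz-like_S_thm}.
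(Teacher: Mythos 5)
Your proposal is correct and follows essentially the same route as the paper: both reduce surjectivity to solving $A\bar x-B\bar y=s$ for a fixed $s\in\mathbb{R}^l$ and exhibit a rank-one solution (your outer product $\|\bar x\|^{-2}s\,\bar x^{\rm T}$ is the coordinate-free form of the single-nonzero-column matrix the paper builds), and both obtain injectivity of the adjoint from surjectivity by finite-dimensional duality. The only cosmetic difference is in the first assertion, where you invoke smoothness of bilinear maps while the paper verifies the \F{} differentiability limit explicitly and then notes the derivative depends continuously on the base point; both hinge on the same fact that continuous differentiability implies strict differentiability.
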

\begin{proof}
Fix any $(\bar w, \bar u) \in W \times U$ with $\bar w = (\bar A, \bar B)$, $\bar u =(\bar x, \bar y)$. Consider the continuous linear operator $T(\bar w, \bar u): W \times U \to V$ defined by
\begin{equation*}\label{T}
	T(\bar w, \bar u)(w, u) := (-x, -y, \bar A x - \bar B y + A \bar x - B \bar y),\ \, w = (A, B) \in W,\ u =(x, y) \in U.
\end{equation*}
One has
\begin{align*}\label{Frechet deri. calculate}
L:&=\displaystyle\lim_{(w, u)\rightarrow (\bar w, \bar u)}
        \dfrac{f(w, u)-f(\bar w,\bar u)- T(\bar w,\bar u)((w, u)-(\bar w,\bar u))}{\|(w, u)-(\bar w,\bar u)\|}\\
&=\displaystyle\lim_{(w, u)\rightarrow (\bar w, \bar u)}\Big[
		\dfrac{(-x, -y, Ax - By)- (-\bar x, -\bar y, \bar A \bar x - \bar B \bar y)}{\|(w-\bar w, u-\bar u)\|}\\
& \qquad  -\dfrac{\big( -(x -\bar x), -(y -\bar y), \bar A (x-\bar x) - \bar B(y-\bar y)+ (A -\bar A)\bar x -(B-\bar B)\bar y\big)}
        {\|(w-\bar w, u-\bar u)\|}\Big] \\
&= \displaystyle\lim_{(w,u)\rightarrow (\bar w, \bar u)}\bigg(0,0,\dfrac{(A -\bar A)(x-\bar x)-(B -\bar B)(y-\bar y)}{\|(w-\bar w, u-\bar u)\|} \bigg).
\end{align*}
Since
\begin{align*}
\dfrac{\|(A - \bar A)(x-\bar x) - (B -\bar B)(y-\bar y)\|}{\|(w-\bar w, u-\bar u)\|}
& \leq \dfrac{\|A -\bar A\| \|x-\bar x\|}{\|(w-\bar w, u-\bar u)\|} +\dfrac{\|B -\bar B\| \|y -\bar y\|}{\|(w-\bar w, u-\bar u)\|}\\
&\leq \dfrac{\|A -\bar A\| \|x -\bar x\|}{\|A -\bar A\|} +\dfrac{\|B -\bar B\| \|y -\bar y\|}{\|B -\bar B\|}\\
& = \|x -\bar x\| + \|y -\bar y\|=\|u-\bar{u}\|\to 0
\end{align*}
when $u$ tends to $\bar u$, one can easily infer that $L = 0$. This means that $f$ is Fr\'echet differentiable at $(\bar w, \bar u)$ and the derivative $\nabla f(\bar w, \bar u)$ coincides with $T(\bar w, \bar u)$. As the function $\Phi:W\times U\to L(W\times U,V)$ with $\Phi(\bar w,\bar u):=T(\bar w, \bar u)$ for every $(\bar w, \bar u)\in W\times U$ is continuous on $W \times U$, $f$ is strictly differentiable at $(\bar w, \bar u)$ (see~\cite[p.~19]{B-M06}) and its strict derivative is given by~\eqref{deri_f}.

To prove the last assertion of the lemma, suppose $\bar u \neq (0, 0)$. We will establish the surjectivity of the operator $\nabla f(\bar w, \bar u)$ by proving that the system
\begin{equation}\label{equation_new1}
	\nabla f(\bar w, \bar u)(w, u) = v
\end{equation}
has a solution $(w,u)\in W\times U$ with $w=(A,B)\in W$ and $u=(x,y)\in U$ for each given $v\in V$.
To see this, we write $v=(v^1, v^2, v^3)$.
By~\eqref{deri_f} we get
\begin{align*}
\nabla f(\bar w, \bar u)(w, u) = v & \, \Longleftrightarrow \, (-x, -y, \bar A x - \bar B y + A \bar x - B \bar y) = (v^1, v^2, v^3)\\
& \, \Longleftrightarrow \, \begin{cases}
x = -v^1\\
y = -v^2\\
\bar A x - \bar B y + A \bar x - B \bar y = v^3.
\end{cases}
\end{align*}
Thus, the condition~\eqref{equation_new1} forces $u=(x,y)=(-v^1, -v^2)$ and the components $A, B$ of $w$ to satisfy the relation
\begin{equation}\label{identity}
A \bar x - B \bar y = v^3 + \bar A v^1 - \bar B v^2.
\end{equation} For our convenience, we put $\widetilde v^3=v^3 + \bar A v^1 - \bar B v^2$ and let $\widetilde v^3=(\widetilde v^3_1, \widetilde v^3_2, \dots, \widetilde v^3_l)$. Then, \eqref{identity} can be rewritten equivalently as
\begin{equation}\label{identity_new1}
	A \bar x - B \bar y = \widetilde v^3.
\end{equation}
Since the vector $\bar u=(\bar x,\bar y)$ is nonzero and (SEP) is symmetric w.r.t. to $x$ and $y$, $A$ and $B$, $C$ and $Q$ (see~\eqref{S}), we can assume, with no loss of generality, that $\bar x\neq 0$. Suppose that $\bar x= (\bar x_1, \bar x_2, \dots, \bar x_n)$ and $\bar x_{\tilde j} \neq 0$ for some $\tilde j \in \{1,\dots, n\}$. Choose $B= (0) \in \mathbb{R}^{l\times m}$ and define a matrix $A = (a_{ij})\in \mathbb{R}^{l\times n}$ by setting $a_{i\tilde j} = \dfrac{\widetilde v^3_i}{\bar x_{\tilde j}}$ for $i \in \{1,\dots, l\}$, $a_{ij} = 0$ for $i \in \{1,\dots, l\}$ and $j \in \{1,\dots, n\} \setminus \{\tilde j\}$. Then we have
 \begin{align*}
	A \bar x - B \bar y = A \bar x = \left (\begin{array}{c} a_{11}\bar x_1
		+ a_{12} \bar x_2 + \cdots + a_{1n} \bar x_n \\
		a_{21}\bar x_1 + a_{22} \bar x_2 + \cdots + a_{2n} \bar x_n\\
		\vdots\\
		a_{l1}\bar x_1 + a_{l2} \bar x_2 + \cdots + a_{ln} \bar x_n
	\end{array}\right) = \left(\begin{array}{c}\widetilde v^3_1\\
		\widetilde v^3_2\\
		\vdots\\
		\widetilde v^3_l \end{array}\right).
\end{align*}
This means that~\eqref{identity_new1} is fulfilled with the chosen matrices $A$ and $B$.
We have thus proved the existence of a solution of the system \eqref{equation_new1} and hence
the surjectivity of $\nabla f(\bar w, \bar u)$ follows.
Therefore, by using~\cite[Lemma 1.18]{B-M06} we obtain the injectivity of the adjoint operator $\nabla f(\bar w, \bar u)^*$.
\end{proof}

\begin{lemma}\label{lemma_G}
The set-valued map $G: W\times U \rightrightarrows V$ in~\eqref{G} has closed graph. Moreover, for any point $(\bar w, \bar u, \bar v)$ belonging to the graph of $G$, the limiting coderivative $D^*G(\bar w, \bar u, \bar v) : V \rightrightarrows W \times U$ is given by the formula
\begin{align}\label{coderi_G}
D^*G(\bar w, \bar u, \bar v)(v')  = \begin{cases}
	\{(0, 0)\}, \quad &\mbox{if } - v' \in N\left(\bar v; C \times Q \times \{0_{\mathbb{R}^l}\} \right)\\
	\emptyset, \quad &\mbox{otherwise,}
\end{cases}
\end{align} where $v'\in V$ is arbitrarily chosen.
\end{lemma}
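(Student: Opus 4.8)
The plan is to exploit the fact that $G$ is a \emph{constant} set-valued map, so that its graph factorizes as a Cartesian product and the whole computation reduces to a normal-cone calculation for a product set. Writing $D := C\times Q\times\{0_{\R^l}\}\subset V$, one has $G(w,u)\equiv D$ for all $(w,u)$, whence
\[
\gph G = (W\times U)\times D .
\]
First I would settle the closedness claim. Since $C$ and $Q$ are closed (being the closed convex constraint sets from the hypothesis) and $\{0_{\R^l}\}$ is closed, and a finite Cartesian product of closed sets is closed, the set $D$ is closed in $V$. As $W\times U$ is the whole space (hence closed), the product $(W\times U)\times D=\gph G$ is closed in $(W\times U)\times V$. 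In particular $G$ is locally closed around every point of its graph, which is precisely what Theorem~\ref{Thm. 4.2-Mor-2004-JOGO} will demand later.

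Next, fix any $(\bar w,\bar u,\bar v)\in\gph G$, so that necessarily $\bar v\in D$. To compute the limiting normal cone to $\gph G$ at this point, I would invoke the product rule for the limiting normal cone (valid unconditionally in finite dimensions, see~\cite{B-M06}), which gives
\[
N\big((\bar w,\bar u,\bar v);\gph G\big)
 = N\big((\bar w,\bar u);W\times U\big)\times N(\bar v; D).
\]
Because $W\times U$ is the entire ambient space, the normal cone to it at any point is the trivial cone $\{(0,0)\}$, so that
\[
N\big((\bar w,\bar u,\bar v);\gph G\big)=\{(0,0)\}\times N(\bar v; D).
\]

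Finally I would read off the coderivative from its definition. By the definition of $D^*G$ recalled in Section~\ref{Sect_Preliminaries}, a pair $(w',u')\in W\times U$ lies in $D^*G(\bar w,\bar u,\bar v)(v')$ if and only if $\big((w',u'),-v'\big)\in N\big((\bar w,\bar u,\bar v);\gph G\big)=\{(0,0)\}\times N(\bar v; D)$, i.e.\ if and only if $(w',u')=(0,0)$ and $-v'\in N(\bar v; D)$. Hence, when $-v'\in N\big(\bar v; C\times Q\times\{0_{\R^l}\}\big)$ the coderivative value is the singleton $\{(0,0)\}$, and otherwise it is empty, which is exactly formula~\eqref{coderi_G}. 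The only genuinely nontrivial ingredient here is the product formula for the limiting normal cone together with the vanishing of the normal cone to a full space; the single point requiring care is bookkeeping, namely the ordering of the three factors in $V=\R^n\times\R^m\times\R^l$ and the minus sign attached to $v'$ in the coderivative convention.
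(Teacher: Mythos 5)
Your proposal is correct and follows essentially the same route as the paper: both exploit that $G$ is constant, write $\gph G=(W\times U)\times\bigl(C\times Q\times\{0_{\mathbb{R}^l}\}\bigr)$ to get closedness, apply the product rule for limiting normal cones so that the $(W\times U)$-component of the normal cone is trivial, and then read formula~\eqref{coderi_G} off the definition of the coderivative. No gaps.
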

\begin{proof}
Since the sets $C$ and $Q$ are closed, $\gph G = W \times U \times (C \times Q \times \{0_{\mathbb{R}^l}\})$ is a closed set in $W \times U \times V$. For any $(\bar w, \bar u, \bar v) \in \gph G$, the value of the limiting coderivative $D^*G(\bar w, \bar u, \bar v)$ at each $v'\in V$ can be computed as follows:
\begin{align*}
 & D^*G(\bar w, \bar u, \bar v)(v')\\
&  = \{(w', u') \in W \times U  :  (w', u', -v')\in N \big((\bar w, \bar u, \bar v); \gph G\big)\}\\
& = \{(w', u') \in W \times U  :  (w', u', -v')\in N \big((\bar w, \bar u, \bar v); W\! \times\! U \times (C \! \times\! Q \times\! \{0_{\mathbb{R}^l}\})\big)\}\\
& = \{(w', u') \in W \times U  :  (w', u', -v')\in \{0_W\} \times \{0_U\} \times N \big( \bar v; C \times Q \times \{0_{\mathbb{R}^l}\}\big)\}.
\end{align*}
This proves~\eqref{coderi_G}.
\end{proof}

Now, we are in a position to prove Theorem~\ref{Lipchitz-like_S_thm}.

\medskip
\textbf{Proof of Theorem~\ref{Lipchitz-like_S_thm}.}\ \, Given any $(\bar A, \bar B) \in \mathbb{R}^{l\times n} \times \mathbb{R}^{l\times m}$ and $(\bar x, \bar y) \in S (\bar A,\bar B)$, we put $\bar w=(\bar A, \bar B)$, $\bar u = (\bar x, \bar y)$, and $\bar v= -f(\bar w, \bar u)=(\bar x, \bar y, - \bar A \bar x + \bar B \bar y).$ On basis of the representation~\eqref{S_solution map}, we will apply Theorem~\ref{Thm. 4.2-Mor-2004-JOGO} to study the Lipschitz-likeness of $S$ at the point $(\bar w, \bar u)\in\gph S$. Note that $f$ is strictly differentiable at $(\bar w, \bar u)$ by Lemma~\ref{lemma_f} and  $G$ has closed graph by Lemma~\ref{lemma_G}.

To go furthermore, we need to explore the implication~\eqref{Lipschitz-like condition}, which, in our setting, reads as follows:
\begin{equation*}
	\big[(w', 0) \in \nabla f(\bar w, \bar u)^*(v')+D^*G(\bar w, \bar u, \bar v)(v') \big]\Longrightarrow [w'=0,\, v'=0].
\end{equation*}
By the formula~\eqref{coderi_G} in Lemma~\ref{lemma_G}, this implication means that: \textit{If $w'=(A', B') \in\!W$ and $v'=(x', y', z') \in V$ are such that
\begin{equation}\label{identity_1anew}
	- v' \in N\left(\bar v; C \times Q \times \{0_{\mathbb{R}^l}\} \right)
\end{equation}
and
\begin{align}\label{identity_2}
	(w', 0) = \nabla f(\bar w, \bar u)^*(v'),
\end{align}
then one must have $w' = 0$ and $v'=0$}.

As a matter of fact, since
$N\left(\bar v; C \times Q \times \{0_{\mathbb{R}^l}\} \right) = N(\bar x, C) \times N(\bar y, Q) \times \mathbb{R}^l,$ the inclusion~\eqref{identity_1anew} holds if and only if
\begin{equation}\label{identity_1a}
	x' \in -N(\bar x, C)\quad {\rm and}\quad  y' \in -N(\bar y, Q).
\end{equation} Clearly,~\eqref{identity_2} is equivalent to the condition
\begin{align*}
	\langle w', w \rangle =  \langle \nabla f(\bar w, \bar u)^*(v'), (w, u) \rangle, \quad \forall w = (A, B) \in W, \, u =(x, y) \in U,
\end{align*}
which means that
\begin{align*}
	\langle w', w \rangle = \langle v', \nabla f(\bar w, \bar u) (w, u) \rangle,\quad \forall w = (A, B) \in W, \, u =(x, y) \in U.
\end{align*}
Thus, using~\eqref{deri_f}, we can rewrite~\eqref{identity_2} equivalently as
\begin{align}\label{identity_2a}
	\langle w', w \rangle = -\langle x', x\rangle - \langle y', y \rangle+ \langle z', \bar A x - \bar B y + A \bar x - B \bar y \rangle
\end{align}
for all $w = (A, B) \in W$ and $u =(x, y) \in U.$

The two assertions of the theorem can be proved as follows.

\smallskip
(\textit{Sufficiency}) Suppose that the condition~\eqref{Lipchitz-like_S_con} is fulfilled. If we can show that the conditions~\eqref{identity_1a} and~\eqref{identity_2a} yield $w' = 0$ and $v'=0$, where $v'=(x', y', z')$, then from the first assertion of Theorem~\ref{Thm. 4.2-Mor-2004-JOGO} and the  above preparations we can deduce that the map $S(\cdot)$ is Lipschitz-like at $(\bar w, \bar u)$.

Substituting $w=(A, B)=(0, 0)$ and $u=(x, y)=(x, 0)$ into the equality \eqref{identity_2a}, we get
\begin{equation}\label{identity_2b}
	\langle x', x\rangle = \langle z', \bar A x\rangle,\quad \forall x\in \mathbb{R}^n.
\end{equation}
It follows that $x' = \bar A^{\rm T}z'$. So, by the first inclusion in~\eqref{identity_1a} one has $\bar A^{\rm T}z' \in -N(\bar x, C)$, i.e.,
\begin{equation}\label{identity_3}
	z' \in \big(\bar A^{\rm T}\big)^{-1}\big(-N(\bar x; C)\big).
\end{equation}
Similarly, substituting $w=(A, B)=(0, 0)$ and $u=(x, y)=(0,y)$ into the equality  \eqref{identity_2a}, we obtain
\begin{equation}\label{identity_2c}
	\langle y', y\rangle = -\langle z', \bar B y\rangle, \quad \forall y\in \mathbb{R}^m.
\end{equation}
This mounts to saying that
$y' =-\bar B^{\rm T}z'$.  Hence,  by virtue of the second inclusion in~\eqref{identity_1a} one has $\bar B^{\rm T}z' \in N(\bar y, Q)$. Therefore,
\begin{equation}\label{identity_4}
	z' \in \big(\bar B^{\rm T}\big)^{-1}\big(N(\bar y; Q)\big).
\end{equation}
Thanks to the assumption \eqref{Lipchitz-like_S_con} together with \eqref{identity_3} and~\eqref{identity_4}, we get $z'=0$. Thus,  using~\eqref{identity_2b} and~\eqref{identity_2c}, we can infer that $x'=0$ and $y' = 0$. Finally, since $x' = 0$, $y'=0$, and $z'=0$, by~\eqref{identity_2a} we can deduce that $w'=0$.

Thus, we have proved that~\eqref{Lipchitz-like_S_con} is a sufficient condition for the map $S(\cdot)$ to be Lipschitz-like at $(\bar w, \bar u)$.

(\textit{Necessity}) Suppose that $S(\cdot)$ is Lipschitz-like at $(\bar w, \bar u)=\big((\bar A, \bar B),(\bar x, \bar y)\big)$ \textit{and the additional assumption $\bar u=(\bar x, \bar y) \neq (0, 0)$ is satisfied}. To apply the second assertion of Theorem~\ref{Thm. 4.2-Mor-2004-JOGO}, we have to prove that the regularity condition~\eqref{regularity condition} holds.  In our setting, this condition is reformulated as:
\begin{equation}\label{regularityS}
	\big[(0, 0) \in \nabla f(\bar w, \bar u)^*(v')+D^*G(\bar w, \bar u, \bar v)(v') \big]\Longrightarrow [v'=0].
\end{equation}  Let $v'=(x', y', z') \in V$ be such that $(0, 0) \in \nabla f(\bar w, \bar u)^*(v')+D^*G(\bar w, \bar u, \bar v)(v').$
According to~\eqref{coderi_G}, this inclusion means that
$- v' \in N\left(\bar v; C \times Q \times \{0_{\mathbb{R}^l}\} \right)$ and
\begin{equation}\label{regularityS_2}
	(0, 0) = \nabla f(\bar w, \bar u)^*(v').
\end{equation}
Since  $\bar u\neq (0,0)$, the operator $\nabla f(\bar w, \bar u)^* : V \to W \times U$ is injective by Lemma~\ref{lemma_f}. Hence, the equality~\eqref{regularityS_2} yields $v'=(0, 0, 0)$. This proves the regularity condition~\eqref{regularityS}. Now since $S(\cdot)$ is Lipschitz-like at $(\bar w, \bar u)$, by the second assertion of Theorem~\ref{Thm. 4.2-Mor-2004-JOGO} we obtain~ \eqref{Lipschitz-like condition}. Next, the preparations given before the above proof of ``sufficiency'' tell us that~\eqref{Lipschitz-like condition} is equivalent to the following implication:
\begin{equation}\label{standing_assumption}\left[w' \in W\ {\rm and}\ v'=(x', y', z') \in V\  {\rm satisfy}\ \eqref{identity_1a}\ {\rm and}\ \eqref{identity_2a}\right]\ \Longrightarrow\ \left[w' = 0,\ \ v'=0\right].
\end{equation}

Using~\eqref{standing_assumption}, we will show that the property~\eqref{Lipchitz-like_S_con} is valid. Take any vector $z'$ with
\begin{align}\label{z'} z'\in \big(\bar A^{\rm T}\big)^{-1}\big(-N(\bar x; C)\big) \cap \big(\bar B^{\rm T}\big)^{-1}\big(N(\bar y; Q)\big).
\end{align}
Then, there exist $x' \in -N(\bar x; C)$ and $y' \in - N(\bar y; Q)$ such that $\bar A^{\rm T}z'=x'$ and $\bar B^{\rm T}z'=-y'$. It follows that $\langle x', x\rangle = \langle z', \bar A x\rangle$ for all $x\in \mathbb{R}^n$ and $\langle y', y\rangle = -\langle z', \bar B y\rangle$ for all $y\in \mathbb{R}^m$, which yields
\begin{align*}
	\langle x', x\rangle + \langle y', y\rangle =  \langle z', \bar A x\rangle -\langle z', \bar B y\rangle,\ \; \forall x\in \mathbb{R}^n,\, \forall y\in \mathbb{R}^m.
\end{align*}
In other words, one has
\begin{align}\label{equality0}
	0= -\langle x', x\rangle - \langle y', y\rangle + \langle z', \bar A x - \bar B y\rangle,\ \; \forall x\in \mathbb{R}^n,\,  \forall y\in \mathbb{R}^m.
\end{align}

{\sc Claim 1:} \textit{There exists $w'=(A',B')\in W$ such that $w'$ and $(x', y', z')$ satisfy~\eqref{identity_2a}}.

Indeed, suppose that $\bar x = (\bar x_1, \bar x_2, \dots, \bar x_n)$, $\bar y = (\bar y_1, \bar y_2, \dots, \bar y_m)$, and $z' = (z'_1, z'_2, \dots, z'_l)$. Choose $w'=(A',B')\in W$, where
\begin{equation*}\label{A'-B'}
	A':=\left (\begin{array}{c c c c}
		z'_1\bar x_1 & z'_1 \bar x_2 &\cdots &z'_1\bar x_n\\
		z'_2\bar x_1 & z'_2 \bar x_2 &\cdots &z'_2\bar x_n\\
		\vdots & \vdots &\cdots &\vdots\\
		z'_l\bar x_1 & z'_l \bar x_2 &\cdots &z'_l\bar x_n
	\end{array}\right),\quad
	B':=\left (\begin{array}{c c c c}
		-z'_1\bar y_1 & -z'_1 \bar y_2 &\cdots &-z'_1\bar y_m\\
		-z'_2\bar y_1 & -z'_2 \bar y_2 &\cdots &-z'_2\bar y_m\\
		\vdots & \vdots &\cdots &\vdots\\
		-z'_l\bar y_1 & -z'_l \bar y_2 &\cdots &-z'_l\bar y_m
	\end{array}\right).
\end{equation*}
Fix any $w = (A, B) \in W$ with
\begin{equation*}
	A=\left (\begin{array}{c c c c}
		a_{11} & a_{12} &\cdots &a_{1n}\\
		a_{21} & a_{22} &\cdots &a_{2n}\\
		\vdots & \vdots &\cdots &\vdots\\
		a_{l1}& a_{l2} &\cdots &a_{ln}
	\end{array}\right),\quad
	B=\left (\begin{array}{c c c c}
		b_{11} & b_{12} &\cdots &b_{1m}\\
		b_{21} & b_{22} &\cdots &b_{2m}\\
		\vdots & \vdots &\cdots &\vdots\\
		b_{l1}& b_{l2} &\cdots &b_{lm}
	\end{array}\right).
\end{equation*}
Then, we have $$\langle A', A\rangle = \displaystyle \sum_{i=1}^l\sum_{j=1}^{n} (z'_i a_{ij} \bar x_j) = \displaystyle \sum_{i=1}^l z'_i \left(\displaystyle \sum_{j=1}^n a_{ij} \bar x_j\right) = \langle z', A \bar x \rangle$$
and
$$\langle B', B\rangle = \displaystyle \sum_{i=1}^l \sum_{j=1}^m (-z'_i b_{ij} \bar y_j) = - \displaystyle \sum_{i=1}^l z'_i \left(\displaystyle \sum_{j=1}^{m} b_{ij} \bar y_j\right) = - \langle z', B \bar y \rangle.$$
It follows that $\langle w', w \rangle=\langle A', A\rangle+ \langle B', B\rangle = \langle z', A \bar x - B \bar y\rangle$. As $w  = (A, B) \in W$ can be chosen arbitrarily, this and~\eqref{equality0} imply that
\begin{align*}
	\langle w', w \rangle = -\langle x', x\rangle - \langle y', y\rangle + \langle z', \bar A x - \bar B y\rangle + \langle z', A \bar x - B \bar y\rangle
\end{align*}
for all $w = (A, B) \in W$ and $u=(x, y)\in U$. In other words, $w'$ and $(x', y', z')$ satisfy~\eqref{identity_2a}. Consequently, Claim 1 is true.

Since $x' \in -N(\bar x; C)$ and $y' \in - N(\bar y; Q)$ by the above constructions, condition~\eqref{identity_1a} holds. Hence, for $v':=(x', y', z')$, using Claim 1 and the implication~\eqref{standing_assumption}, we obtain $w' =0$ and $v'=0$. In particular, one has $z'=0$. Since the vector $z'$ satisfying~\eqref{z'} is arbitrary, the validity of ~\eqref{Lipchitz-like_S_con} has been proved.

The proof of the theorem is complete. $\hfill\Box$

\medskip
The additional condition $(\bar x, \bar y) \neq (0, 0)$ is essential for the validity of the second assertion of Theorem~\ref{Lipchitz-like_S_thm}. In other words, if $(\bar x, \bar y)=(0, 0)$, then it may happen that $S(\cdot)$ is Lipschitz-like at $\big((\bar A, \bar B),(\bar x, \bar y)\big)$, but~\eqref{Lipchitz-like_S_con} fails to hold. To justify the claim, let us consider the following examples.
\begin{Example}
	{\rm Let $l=m=n\geq 1$, $C=Q=\{0\}$, $\bar A\in \mathbb{R}^{n\times n}$, $\bar B \in  \mathbb{R}^{n\times n}$ be any matrices, and $(\bar x, \bar y)=(0, 0)\in\mathbb R^n\times\mathbb R^n$. Clearly, $(\bar x, \bar y)=(0, 0)\in S (\bar A, \bar B)$. In addition, $S (A, B)=\{(0,0)\}$ for all $(A, B) \in \mathbb{R}^{n\times n} \times \mathbb{R}^{n\times n}$. Since $S(\cdot)$ is a constant set-valued, it is Lipschitz-like at $\big((\bar A, \bar B),(\bar x, \bar y)\big)$. Hence~\eqref{Lipchitz-like_S_con} is invalid, because the set on its left-hand side is~$\mathbb R^n$.}
\end{Example}

\begin{Example}
	{\rm Let $l=m=n=1$, $C=(-\infty, 0]$, $Q=[0,+\infty)$. Besides, let $\bar A = (\bar \alpha)$, $\bar B=(\bar\beta)$ with $\bar\alpha>0$, $\bar\beta>0$ and take $(\bar x, \bar y)=(0, 0)\in\mathbb R\times\mathbb R$. Obviously, $(\bar x, \bar y)=(0, 0)\in S(\bar A, \bar B)$. In addition, $S (A, B)=\{(0,0)\}$ for all $(A, B)$ in a neighborhood of $(\bar A, \bar B)$. As $S(\cdot)$ is a locally constant set-valued map around $(\bar A, \bar B)$, it is Lipschitz-like at $\big((\bar A, \bar B),(\bar x, \bar y)\big)$. Since the set on the left-hand side of~\eqref{Lipchitz-like_S_con} is $(-\infty,0]$, the equality does not hold}.
\end{Example}

\section{Split Feasibility Problems}\label{SFP}\setcounter{equation}{0}
In this section, we study the stability of the solutions of the split feasibility problem, which is recalled as follows.
Let $C \subset \mathbb R^n$ and $Q \subset \mathbb R^ m$ be given nonempty closed convex sets, and let $A \in \mathbb R^{m\times n}$ be a given matrix. The problem of finding a point $x \in C$ such that $Ax \in Q$ is called the \textit{split feasibility problem} (SFP). We pay attention to stability properties of (SFP) when $A$ is subject to change, while $C, Q$ are fixed. We define the solution map $S_1(\cdot): \mathbb R^{m \times n} \rightrightarrows \mathbb R^n$ of (SFP) by
\begin{equation}\label{S1}
S_1(A)=\{x \in C \, : \, Ax \in Q\}, \quad A \in \mathbb R^{m\times n}.
\end{equation}

In the main result of this section, Theorem~\ref{Lipchitz-like_S1_thm}, we provide
sufficient and necessary conditions for the Lipschitz-likeness property of the solution map $S_1(\cdot)$ at a given point in its graph.

\begin{theorem}\label{Lipchitz-like_S1_thm}
	Let $\bar A \in \mathbb{R}^{m\times n}$ and $\bar x \in S_1(\bar A)$. If
	\begin{equation}\label{Lipchitz-like_S1_con}
		\big(\bar A^{\rm T}\big)^{-1}(-N(\bar x; C)) \cap N(\bar A \bar x; Q)=\{0\},
	\end{equation}
	then the solution map $S_1(\cdot)$ is Lipschitz-like at $(\bar A, \bar x)$. Conversely, if $S_1(\cdot)$ is Lipschitz-like at $(\bar A, \bar x)$ and if $\bar x \neq 0$, then~\eqref{Lipchitz-like_S1_con} is fulfilled.
\end{theorem}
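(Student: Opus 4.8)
The plan is to prove the two implications by genuinely different routes, exactly as signalled in the Introduction. For the sufficiency I would \emph{reduce to} Theorem~\ref{Lipchitz-like_S_thm}, exploiting that (SFP) is the special case of (SEP) in which the second matrix is frozen at the identity $I_m\in\mathbb R^{m\times m}$ and the second variable is slaved to $y=Ax$. For the necessity this reduction is unavailable, so I would instead recast (SFP) itself as a parametric generalized equation of the form~\eqref{para. generalized equa. def.} and invoke the converse half of Theorem~\ref{Thm. 4.2-Mor-2004-JOGO} directly.

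\emph{Sufficiency.} First I would set $\bar B=I_m$ and $\bar y=\bar A\bar x$, so that $(\bar x,\bar y)\in S(\bar A,\bar B)$ for the associated (SEP). Because $\bar B^{\rm T}=I_m$, one has $(\bar B^{\rm T})^{-1}(N(\bar y;Q))=N(\bar A\bar x;Q)$, and hence the hypothesis~\eqref{Lipchitz-like_S1_con} is \emph{literally} condition~\eqref{Lipchitz-like_S_con}. Theorem~\ref{Lipchitz-like_S_thm} then makes $S(\cdot,\cdot)$ Lipschitz-like at $\big((\bar A,I_m),(\bar x,\bar y)\big)$. Restricting the second argument to the frozen value $I_m$ shows that the slice map $\widetilde S_1(A):=S(A,I_m)=\{(x,Ax):x\in S_1(A)\}$ is Lipschitz-like at $\big(\bar A,(\bar x,\bar y)\big)$, since $\|(A',I_m)-(A,I_m)\|=\|A'-A\|$. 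Finally I would transport this through the coordinate projection $(x,y)\mapsto x$: for $A'$ near $\bar A$ and $x'\in S_1(A')$ near $\bar x$, continuity of $(A',x')\mapsto(x',A'x')$ places $(x',A'x')$ in the inner neighbourhood of $\widetilde S_1$, and the Lipschitz-like inclusion for $\widetilde S_1$ returns some $(x,Ax)\in\widetilde S_1(A)$, i.e.\ $x\in S_1(A)$, with $\|x'-x\|\le\ell\|A'-A\|$. These two transfer steps are routine but require choosing the neighbourhoods in the right order.

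\emph{Necessity.} Here the reduction fails, so I would introduce $W_1=\mathbb R^{m\times n}$, $U_1=\mathbb R^n$, $V_1=\mathbb R^n\times\mathbb R^m$ and set $f_1(A,x):=(-x,-Ax)$ and $G_1(A,x):=C\times Q$, so that $S_1(A)=\{x\in U_1:0\in f_1(A,x)+G_1(A,x)\}$. Two auxiliary facts, proved exactly as Lemmas~\ref{lemma_f} and~\ref{lemma_G}, are needed: $f_1$ is strictly differentiable with $\nabla f_1(\bar A,\bar x)(A,x)=(-x,-\bar A x-A\bar x)$; and $G_1$, being constant with the closed convex value $C\times Q$, has closed graph, is graphically regular, and satisfies $D^*G_1(\cdot)(v')=\{(0,0)\}$ precisely when $-v'\in N(\bar v;C\times Q)=N(\bar x;C)\times N(\bar A\bar x;Q)$, where $\bar v=(\bar x,\bar A\bar x)$. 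Assuming $\bar x\ne 0$, the surjectivity argument of Lemma~\ref{lemma_f} applies verbatim --- choose a nonzero coordinate of $\bar x$ and fill the corresponding column of $A$ to realize any prescribed value of $A\bar x$ --- so $\nabla f_1(\bar A,\bar x)$ is onto, its adjoint is injective, and the regularity condition~\eqref{regularity condition} holds automatically since $(0,0)=\nabla f_1(\bar A,\bar x)^*(v')$ forces $v'=0$. With~\eqref{regularity condition} secured and $S_1$ assumed Lipschitz-like, the converse part of Theorem~\ref{Thm. 4.2-Mor-2004-JOGO} yields~\eqref{Lipschitz-like condition}. Computing the adjoint as $\nabla f_1(\bar A,\bar x)^*(x',y')=\big(-y'\bar x^{\rm T},\,-(x'+\bar A^{\rm T}y')\big)$, condition~\eqref{Lipschitz-like condition} unwinds to the statement that $x'=-\bar A^{\rm T}y'\in-N(\bar x;C)$ together with $y'\in-N(\bar A\bar x;Q)$ force $y'=0$, and the substitution $z'=-y'$ turns this into exactly~\eqref{Lipchitz-like_S1_con}.

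The step I expect to be the real obstacle is the necessity, and specifically the reason the clean product-space reduction of the sufficiency cannot be reversed: Lipschitz-likeness of the $A$-only map $S_1$ does \emph{not} entail that of the fully perturbed map $S(\cdot,\cdot)$ of (SEP), so the necessity half of Theorem~\ref{Lipchitz-like_S_thm} is unavailable and Theorem~\ref{Thm. 4.2-Mor-2004-JOGO} must be run on the (SFP) generalized equation from scratch. Within that argument the delicate point is verifying the regularity condition~\eqref{regularity condition}, which rests entirely on the surjectivity of $\nabla f_1(\bar A,\bar x)$; this is where $\bar x\ne 0$ is indispensable, for when $\bar x=0$ the adjoint need not be injective, the regularity condition may fail, and~\eqref{Lipchitz-like_S1_con} can no longer be deduced.
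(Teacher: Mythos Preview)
Your proposal is correct and follows essentially the same route as the paper's proof: sufficiency by specializing Theorem~\ref{Lipchitz-like_S_thm} with $\bar B=I_m$, $\bar y=\bar A\bar x$ and then transferring the Lipschitz-like property through the slice $A\mapsto S(A,I_m)$ and the projection $(x,y)\mapsto x$; necessity by casting $S_1$ as the solution map of the generalized equation $0\in f_1(A,x)+G_1(A,x)$ with $f_1(A,x)=(-x,-Ax)$ and $G_1\equiv C\times Q$, using $\bar x\neq 0$ to get surjectivity of $\nabla f_1(\bar A,\bar x)$ and hence the regularity condition~\eqref{regularity condition}, and then unwinding~\eqref{Lipschitz-like condition}. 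The only cosmetic difference is that you compute the adjoint $\nabla f_1(\bar A,\bar x)^*(x',y')=\big(-y'\bar x^{\rm T},\,-(x'+\bar A^{\rm T}y')\big)$ explicitly, whereas the paper works with inner-product identities and constructs the matrix $A'=v'\bar x^{\rm T}$ by hand; the content is identical.
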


\begin{remark}{\rm Obviously, if either $(\bar A^{\rm T})^{-1}(-N(\bar x; C)) =\{0\}$ or $N(\bar A \bar x; Q)=\{0\}$, then condition~\eqref{Lipchitz-like_S1_con} is satisfied. Thus, if either $\bar x \in \mbox{int}\, C$ and $\ker \bar A^{\rm T} = \{0\}$ or $\bar A \bar x \in \mbox{int}\, Q$, then~\eqref{Lipchitz-like_S1_con} is valid.}
\end{remark}

Let us provide two illustrating examples for Theorem~\ref{Lipchitz-like_S1_thm}.

\begin{Example}{\rm Choose $m=n=1$, $C= [-1, 1]$, and $Q=[0, +\infty)$. Let $\bar A=(\bar\alpha)$ with $\bar\alpha> 0$ and fix any $\bar x \in [0, 1]$. It is not hard to see that $\bar x \in S_1(\bar A)$. If $\bar x  \in [0, 1)$, then $\bar x \in \mbox{int}\, C$. As $\ker \bar A^{\rm T} = \{0\}$, this implies that~\eqref{Lipchitz-like_S1_con} is satisfied. Therefore, by Theorem~\ref{Lipchitz-like_S1_thm}, $S_1(\cdot)$ is Lipschitz-like at $(\bar A, \bar x)$. If $\bar x = 1$, then $\bar A \bar x = \bar\alpha\in \mbox{int}\, Q$. Thus, \eqref{Lipchitz-like_S1_con} is fulfilled. Hence, the Lipschitz-likeness of $S_1(\cdot)$ at $(\bar A, \bar x)$ is assured by Theorem~\ref{Lipchitz-like_S1_thm}.}
\end{Example}

\begin{Example}{\rm As in the previous example, we consider (SFP) with  $m=n=1$, $C= [-1, 1]$, and $Q=[0, +\infty)$. Let $\bar A=(0)$ and take any $\bar x \in [-1, 1]$. Then, one has $\bar x \in S_1(\bar A)$. Observe that $\big(\bar A^{\rm T}\big)^{-1}\big(-N(\bar x; C)\big) = \mathbb R$, while $N(\bar A\bar x; Q)  = (-\infty, 0]$. Hence, one has $$\big(\bar A^{\rm T}\big)^{-1}\big(-N(\bar x; C)\big) \cap N(\bar A\bar x; Q)   = (-\infty, 0].$$ This means that condition~\eqref{Lipchitz-like_S1_con} is invalid. Therefore, if $\bar x \neq 0$, then by Theorem~\ref{Lipchitz-like_S1_thm} we can conclude that $S_1(\cdot)$ is \textit{not} Lipschitz-like at $(\bar A, \bar x)$.}
\end{Example}

The next examples show that the assumption~$\bar x \neq 0$ is vital for the second assertion of Theorem~\ref{Lipchitz-like_S1_thm}.

\begin{Example} {\rm
 Let $m, n$ be any positive integers, $C=\{0_{\mathbb R^n}\}$, and $Q=\{0_{\mathbb R^m}\}$. Take any matrix $\bar A$ from  $\mathbb{R}^{m\times n}$ and let $\bar x = \{0_{\mathbb R^n}\}$. Clearly, $\bar x\in S_1(\bar A)$. In addition, $S_1(A)=\{0_{\mathbb R^n}\}$ for all $A \in \mathbb{R}^{m\times n}$. Since $S_1(\cdot)$ is a constant set-valued map, it is Lipschitz-like at $(\bar A, \bar x)$. But the equality~\eqref{Lipchitz-like_S1_con} is invalid, because the set on its left-hand side is~$\mathbb R^m$.	
 This shows that the nonzero solution (i.e., $\bar x \neq 0$) assumption in the necessity condition of Theorem \ref{Lipchitz-like_S1_thm} can't be removed.
}
\end{Example}

\begin{Example} {\rm
Let $m, n$ be arbitrarily positive integers, $C = \mathbb R_-^n$ and $Q = \mathbb R_+^m$, where
$$\mathbb R_-^n:=\{x = (x_1, x_2, \dots, x_n) \in \mathbb R^n \, : \, x_i \leq 0,\; \forall i \in \{1,\dots, n\} \}$$
and
$$\mathbb R_+^m:=\{y = (y_1, y_2, \dots, y_m) \in \mathbb R^m \, : \, y_i \geq 0,\; \forall i \in \{1,\dots, m\}\}$$
are the nonpositive orthant and nonnegative orthant in $\mathbb R^n$ and $\mathbb R^m$, respectively. Choose $\bar A = (\bar a_{ij}) \in \mathbb{R}^{m\times n}$ with $\bar a_{ij} > 0$ for all $i, j$ and take $\bar x = \{0_{\mathbb R^n}\}$. It is easy to see that $\bar x \in S_1(\bar A)$. Besides, $S_1(A) = \{0_{\mathbb R^n}\}$ for any matrix $A \in \mathbb{R}^{m\times n}$ close enough to $\bar A$. Thus, the solution map $S_1(\cdot)$ is Lipschitz-like at $(\bar A, \bar x)$.
Since $N(\bar A \bar x; Q)=\mathbb R^m_-$ and $\big(\bar A^{\rm T}\big)(\mathbb R^m_-)\subset -N(\bar x; C)$, the set on the left-hand side of~\eqref{Lipchitz-like_S1_con} is~$\mathbb R^m_-$. So, the equality is not fulfilled.}
\end{Example}

\textbf{Proof of Theorem~\ref{Lipchitz-like_S1_thm}.}\ \, Fix any $\bar A \in \mathbb{R}^{m\times n}$ and $\bar x \in S_1(\bar A)$. We divide the proof into two parts: the ``sufficiency" is derived from the first assertion of Theorem~\ref{Lipchitz-like_S_thm}, while the ``necessity'' requires a series of independent arguments. (Note that the second assertion of Theorem~\ref{Lipchitz-like_S_thm} is based on the assumption that \textit{both matrices $A$ and $B$ are subject to perturbations}. Meanwhile, for (SFP), only the matrix $A$ is perturbed.)

\smallskip
(\textit{Sufficiency}) Suppose that condition~\eqref{Lipchitz-like_S1_con} holds. Put $\bar y = \bar A \bar x$ and let $\bar B$ be the identity matrix $I_m$ in $\mathbb R^{m \times m}$. As $\bar x \in S_1(\bar A)$, one has $(\bar x, \bar y) \in S(\bar A, \bar B)$, where $S(\cdot)$ is the solution map of (SEP) with $l:=m$. Besides, since~\eqref{Lipchitz-like_S1_con} holds, condition~\eqref{Lipchitz-like_S_con} is fulfilled. Thus, by Theorem~\ref{Lipchitz-like_S_thm}, the map $S(\cdot)$ is Lipschitz-like at $\big((\bar A, \bar B),(\bar x, \bar y)\big)$. Therefore, we can find constants $\delta_1 >0$, $\delta_2>0$ , and $\ell >0$ such that
\begin{align*}
	S(A', B') \cap  {B}\big((\bar x, \bar y), \delta_2\big) \subset S(A, B) + \ell \|(A', B') -(A, B)\| \Bar {\B}_{\mathbb R^n\times \mathbb R^m}
\end{align*}
for any $(A', B'), (A, B) \in  \B\big((\bar x, \bar y), \delta_1\big)$. In particular, one has
\begin{align}\label{lip1}
	S(A', I_m) \cap \B\big((\bar x, \bar y), \delta_2\big) \subset S(A, I_m) + \ell \| A' - A\| \Bar {\B}_{\mathbb R^n\times \mathbb R^m},
\end{align}
for all $A', A \in \B(\bar A, \delta_1).$
Choose $\delta_1' \in (0, \delta_1)$ and $\delta_2'\in (0,\delta_2)$ as small as \begin{equation}\label{small_delta} (1+\|\bar A\|) \delta_2' + \delta_1' (\|\bar x\| +\delta_2) \leq \delta_2.\end{equation}
We claim that
\begin{equation}\label{lip2}
	S_1(A') \cap \B (\bar x, \delta_2') \subset S_1(A) +\ell \|A'-A\| \Bar {\B}_{\mathbb R^n}, \quad \forall A', \, A \in \B(\bar A, \delta_1').
\end{equation}
To prove this, let $A', \, A \in B(\bar A, \delta_1')$ be given arbitrarily. Take an $x' \in S_1(A') \cap \B (\bar x, \delta_2')$ and put $y'=A'x'$. Then we have
\begin{equation}\label{lip3}
(x', y') \in	S(A', I_m).
\end{equation}
 Besides, it holds that
\begin{align*}
\| (x', y') - (\bar x, \bar y)\| & =  \|x'-\bar x\| + \|A'x' - \bar A \bar x\| \\
& \leq \|x'-\bar x\| + \|A'x' - \bar A x'\| + \| \bar A x'- \bar A \bar x\| \\
& \leq (1+\|\bar A\|)\| x' - \bar x\| + \|A' -\bar A\| \|x'\|.
\end{align*}
Combining this with the facts that $A',\, A \in \B(\bar A, \delta_1')$ and $x' \in  \B (\bar x, \delta_2')$ yields $$\| (x', y') - (\bar x, \bar y)\|  \leq  (1+\|\bar A\|) \delta_2' + \delta_1' (\|\bar x\| +\delta_2') <  (1+\|\bar A\|) \delta_2' + \delta_1' (\|\bar x\| +\delta_2).$$ Thus, by~\eqref{small_delta} we get  $(x', y') \in \B\big((\bar x, \bar y), \delta_2\big).$ This and~\eqref{lip3} imply that $$(x', y') \in	S(A', I_m) \cap \B\big((\bar x, \bar y), \delta_2\big).$$ So, by~\eqref{lip1} we can find $(x, y) \in S(A, I_m)$ and $(u,v) \in \Bar {\B}_{\mathbb R^n\times \mathbb R^m}$ such that $$(x', y') = (x, y) + \ell \|A' - A\| (u,v).$$
Particularly, we get
\begin{equation}\label{represent1}
x' = x + \ell \|A' - A\|u.
\end{equation}
Since $(x, y) \in S(A, I_m)$ and $(u,v) \in \Bar {\B}_{\mathbb R^n\times \mathbb R^m}$, we obtain $x \in S_1(A)$ and $u\in \Bar {\B}_{\mathbb R^n}$. Thus, the representation~\eqref{represent1} yields $x' \in S_1(A) +\ell \|A'-A\| \Bar {\B}_{\mathbb R^n}$.
The claim \eqref{lip2} is verified.

Clearly, the validity of~\eqref{lip2} for the chosen constants $\delta_1'$ and $\delta_2'$ implies that $S_1(\cdot)$ is Lipschitz-like at $(\bar A, \bar x)$.

(\textit{Necessity}) Suppose that $S_1(\cdot)$ is Lipschitz-like at $(\bar A, \bar x)$ and $\bar x \neq 0$. To show that condition~\eqref{Lipchitz-like_S1_con} is valid, we consider the function $f_1: \mathbb R^{m\times n} \times \mathbb R^n \to \mathbb R^n \times \mathbb R^m$ defined by
\begin{equation}\label{f1}
	f_1(A, x) :=(-x, -A x), \quad (A, x) \in \mathbb R^{m\times n} \times \mathbb R^n
\end{equation}
and the constant set-valued map $G_1: \mathbb R^{m\times n} \times \mathbb R^n \rightrightarrows \mathbb R^n \times \mathbb R^m$ with
\begin{equation}\label{G1}
	G_1(A, x):=C \times Q, \quad (A, x) \in \mathbb R^{m\times n} \times \mathbb R^n.
\end{equation} Clearly, formula~\eqref{S1} can be rewritten as
\begin{equation*}
	S_1(A) = \{x \in \mathbb R^n \, : \,  0 \in f_1(A, x) + G_1(A, x)\}, \quad A \in \mathbb R^{m\times n}.
\end{equation*}
Consequently, $S_1(\cdot)$ is the solution map of the parametric generalized equation $$0 \in f_1(A, x) + G_1(A, x),$$ where $A$ is a parameter. To employ the second assertion of Theorem~\ref{Thm. 4.2-Mor-2004-JOGO} for $S_1(\cdot)$, we have to prove some auxiliary results.

{\sc Claim 1.} \textit{The function $f_1$ is strictly differentiable at any $(\bar A, \bar x) \in \mathbb R^{m\times n} \times \mathbb R^n$. The derivative $\nabla f_1(\bar A, \bar x): \mathbb R^{m\times n} \times \mathbb R^n \to \mathbb R^n \times \mathbb R^m$ of $f_1$ at $(\bar A, \bar x)$ is given by
	\begin{equation}\label{deri_f1}
		\nabla f_1(\bar A, \bar x)(A, x) = (-x, - \bar A x  - A \bar x), \quad (A, x) \in \mathbb R^{m\times n} \times \mathbb R^n.
	\end{equation}
	Moreover, if $\bar x \neq 0$, then  the operator $\nabla f_1(\bar A, \bar x)$ is surjective and thus its adjoint operator  $\nabla f_1(\bar A, \bar x)^* : \mathbb R^n \times \mathbb R^m \to \mathbb R^{m\times n} \times \mathbb R^n$ is injective.}

	Indeed, fixing any $(\bar A, \bar x) \in \mathbb R^{m\times n} \times \mathbb R^n$, we consider the continuous linear operator $T_1(\bar A, \bar x): \mathbb R^{m\times n} \times \mathbb R^n \to \mathbb R^n \times \mathbb R^m$ given by
	\begin{equation}\label{T1}
		T_1(\bar A, \bar x) (A, x):= (-x, - \bar A x  - A \bar x), \quad (A, x) \in \mathbb R^{m\times n} \times \mathbb R^n.
	\end{equation}
 Set
 $$L_1:=\displaystyle\lim_{(A, x)\rightarrow (\bar A,\bar x)}\dfrac{f_1(A, x)-f_1(\bar A,\bar x)
 - T_1(\bar A,\bar x)((A, x)-(\bar A,\bar x))}{\|(A, x)-(\bar A,  \bar x)\|}.$$
 By~\eqref{f1} one has
	\begin{align*}
	L_1	&=\displaystyle\lim_{(A, x)\rightarrow (\bar A,\bar x)}\Big[
			\dfrac{(-x, -Ax)- (-\bar x, -\bar A \bar x) - \big( -(x -\bar x), - \bar A (x-\bar x) - (A -\bar A)\bar x \big)}
        {\|A-\bar A\| +\| x-\bar x\|} \Big] \\
		& =\displaystyle\lim_{(A, x)\rightarrow (\bar A,\bar x)}\bigg (0,\dfrac{-(A -\bar A)(x-\bar x)}{\|A-\bar A\|+\|x-\bar x\|} \bigg).
	\end{align*}
	Because
	\begin{align*}
		\dfrac{\|-(A - \bar A)(x-\bar x)\|}{\|A-\bar A\|+\|x-\bar x\|}
\leq \dfrac{\|A - \bar A\| \|x-\bar x\|}{\|A-\bar A\|+\| x-\bar x\|}\leq \|x-\bar x\|\to 0
	\end{align*}
 as $x\to \bar x$,  one immediately gets $L_1 = 0$. This proves that $f_1$ is Fr\'echet differentiable  at $(\bar A, \bar x)$ and $\nabla f_1(\bar A, \bar x)$ coincides with the operator $T_1(\bar A, \bar x)$ in~\eqref{T1}. Since the function $\Phi_1:\mathbb R^{m\times n} \times \mathbb R^n\to L(\mathbb R^{m\times n} \times \mathbb R^n,\mathbb R^n \times \mathbb R^m)$ with $\Phi_1(\widetilde A,\widetilde x):=T_1(\widetilde A, \widetilde x)$ is continuous on $\mathbb R^{m\times n} \times \mathbb R^n$, $f_1$ is strictly differentiable at $(\bar A, \bar x)$ (see~\cite[p.~19]{B-M06}) and its strict derivative is given by~\eqref{deri_f1}.
	
	To justify the last assertion of the claim, suppose that $\bar x = (\bar x_1, \bar x_2, \dots, \bar x_n)$ with $\bar x_{\tilde j} \neq 0$ for some index $\tilde j \in \{1, 2, \dots, n\}$. The surjectivity of  $\nabla f_1(\bar A, \bar x)$ will be proved by showing that,  for any given point $(u, v) \in \mathbb R^n \times \mathbb R^m$, there exists $(A, x) \in \mathbb R^{m\times n} \times \mathbb R^n$ satisfying
	\begin{equation}\label{equation_new2}
		\nabla f_1(\bar A, \bar x)(A, x) = (u, v).
	\end{equation}
	By~\eqref{deri_f1}, we have $\nabla f_1(\bar A, \bar x)(A, x) = (u, v)$ if and only if $(-x, - \bar A x  - A \bar x) = (u, v)$. Thus, relation~\eqref{equation_new2} requires $x = -u$ and $A$ is such a matrix that
	\begin{equation}\label{identity_5new}
		A \bar x = \bar A u - v.
	\end{equation}
	Put $\widetilde v=\bar A u - v$ and write $\widetilde v=(\widetilde v_1, \widetilde v_2, \dots, \widetilde v_m)$.
Clearly,~\eqref{identity_5new} is equivalent to
	\begin{align}\label{identity_5}
		A \bar x = \widetilde v.
	\end{align}
	Define $A = (a_{ij}) \in \mathbb{R}^{m\times n}$ by setting $a_{i\tilde j}
= \dfrac{\widetilde v_i}{\bar x_{\tilde j}}$ for $i \in \{1, \dots, m\}$, $a_{ij} = 0$ for $i \in \{1, \dots, m\}$ and $j \in \{1, \dots, n\} \setminus \{\tilde j\}$. Then we have
	\begin{align*}
		A \bar x = \left (\begin{array}{c} a_{11}\bar x_1
			+ a_{12} \bar x_2 + \cdots + a_{1n} \bar x_n \\
			a_{21}\bar x_1 + a_{22} \bar x_2 + \cdots + a_{2n} \bar x_n\\
			\vdots\\
			a_{m1}\bar x_1 + a_{m2} \bar x_2 + \cdots + a_{mn} \bar x_n
		\end{array}\right) = \left(\begin{array}{c}\widetilde v_1\\
			\widetilde v_2\\
			\vdots\\
			\widetilde v_m \end{array}\right).
	\end{align*}
	So, this matrix $A$ satisfies~\eqref{identity_5} and the surjectivity of $\nabla f_1(\bar A, \bar x)$ is proved.
The injectivity of $\nabla f_1(\bar A, \bar x)^*$ is then guaranteed by~\cite[Lemma 1.18]{B-M06}.

Since the proof of the next claim is similar to that of  Lemma~\ref{lemma_G}, we omit it.

{\sc Claim 2.} \textit{The set-valued map $G_1: \mathbb R^{m\times n} \times \mathbb R^n \rightrightarrows \mathbb R^n \times \mathbb R^m$ defined in~\eqref{G1} has closed graph and, for any given point $\big((\bar A, \bar x), (\bar u, \bar v)\big)$ belonging to the graph of $G_1$, the limiting coderivative $D^*G_1\big((\bar A, \bar x), (\bar u, \bar v)\big):\mathbb R^n \times \mathbb R^m\rightrightarrows \mathbb R^{m\times n} \times \mathbb R^n$ is given by
	\begin{equation}\label{coderi_G1}
		D^*G_1((\bar A, \bar x), (\bar u, \bar v))(u', v')=
		\begin{cases}
			\{(0, 0)\}, \quad &\mbox{if } u' \in -N(\bar u; C), \ v' \in -N(\bar v; Q)\\
			\emptyset, & \mbox{otherwise},
		\end{cases}
	\end{equation}
	where $(u', v')$ is an arbitrary point in $\mathbb R^n \times \mathbb R^m$.}

Since $f_1$ is strictly differentiable at $(\bar A, \bar x)$ by Claim~1 and  $G_1$ has closed graph by Lemma~\ref{lemma_G},  to apply Theorem~\ref{Thm. 4.2-Mor-2004-JOGO}, we need to prove that condition~\eqref{regularity condition} holds.  In our setting, the latter reads as follows:
\begin{equation}\label{regularityS1}
	\big[(0, 0) \in \nabla f_1(\bar A, \bar x)^*(u', v') + D^*G_1\big((\bar A, \bar x), (\bar u, \bar v)\big) (u', v') \big]\Longrightarrow [(u', v')=(0, 0)],
\end{equation}
where $(\bar u, \bar v) = -f_1(\bar A, \bar x) = (\bar x, \bar A \bar x).$  Suppose that $(u', v') \in \mathbb R^n \times \mathbb R^m$ satisfies
\begin{equation*}
	(0, 0) \in \nabla f_1(\bar A, \bar x)^*(u', v') + D^*G_1\big((\bar A, \bar x), (\bar u, \bar v)\big) (u', v').
\end{equation*}
Then, by invoking formula~\eqref{coderi_G1}, we get
\begin{equation}\label{cases}
		\nabla f_1(\bar A, \bar x)^*(u', v') = (0, 0).
\end{equation}
As $\bar x \neq 0$, the operator $\nabla f_1(\bar A, \bar x)^* : \mathbb R^n \times \mathbb R^m \to \mathbb R^{m\times n} \times \mathbb R^n$ is injective by Claim~1. So, it follows from~\eqref{cases} that $(u', v') = (0, 0)$. We have thus verified condition~\eqref{regularityS1}.
We next turn to prove (\ref{Lipchitz-like_S1_con}).

 Since $S_1(\cdot)$ is Lipschitz-like at $(\bar A, \bar x)$, by the second assertion of Theorem~\ref{Thm. 4.2-Mor-2004-JOGO}, we obtain~\eqref{Lipschitz-like condition}. In our setting, condition~\eqref{Lipschitz-like condition} can be restated as follows:
\begin{equation*}
	\big[(A', 0) \in \nabla f_1(\bar A, \bar x)^*(u', v') + D^*G_1\big((\bar A, \bar x), (\bar u, \bar v)\big) (u', v') \big]\Longrightarrow [A'=0,\, u' = 0, \, v'=0].
\end{equation*}
Thanks to~\eqref{coderi_G1}, this implication means that if $A' \in \mathbb R^{m \times n}$ and $(u', v') \in \mathbb R^n \times \mathbb R^m$ are such that
\begin{equation}\label{inclusion_1}
	u' \in - N(\bar x; C),\ \;  v' \in - N(\bar A \bar x; Q)
\end{equation}
and
\begin{equation}\label{inclusion_2}
	(A', 0) = \nabla f_1(\bar A, \bar x)^*(u', v'),
\end{equation}
then one must have $A' = 0$, $u' = 0$, and $v' = 0$. Equality~\eqref{inclusion_2} is equivalent to
\begin{align*}
	\langle A', A \rangle &= \langle \nabla f_1(\bar A, \bar x)^*(u', v'), (A, x) \rangle \\
	& = \langle (u', v'), \nabla f_1(\bar A, \bar x) (A, x) \rangle, \quad \forall (A, x) \in \mathbb R^{m \times n} \times \mathbb R^n.
\end{align*}
Therefore, by~\eqref{deri_f1} we can rewrite~\eqref{inclusion_2} as
\begin{equation}\label{inclusion_2a}
	\langle A', A \rangle = - \langle  u', x \rangle -  \langle v',  \bar A x + A \bar x \rangle, \quad \forall (A, x) \in \mathbb R^{m \times n} \times \mathbb R^n.
\end{equation}
Thus,~\eqref{Lipschitz-like condition} means the following:
\begin{equation}\label{standing_assumption1}
	\left[A'\ {\rm and}\  (u', v')\  {\rm satisfy}\  \eqref{inclusion_1}\ {\rm and}\  \eqref{inclusion_2a}\right] \Longrightarrow \left[A' = 0,\ u' =0,\  v'=0\right].
\end{equation}
Now, to complete the proof, we will use~\eqref{standing_assumption1} to show that~\eqref{Lipchitz-like_S1_con} holds.

Take any $v'\in \big(\bar A^{\rm T}\big)^{-1}\big(-N(\bar x; C)\big) \cap N\big(\bar A \bar x; Q\big)$. Then, $v' \in N\big(\bar A \bar x; Q\big)$ and there exists $u' \in -N(\bar x; C)$ such that $\bar A^{\rm T}(v') =u'$. On the one hand, as $\bar A^{\rm T}(v')-u'=0$, we get
\begin{equation}\label{inclusion_3}
	0 = - \langle  u', x \rangle - \langle -v',  \bar A x \rangle, \quad \forall x \in \mathbb R^n.
\end{equation}
On the other hand, suppose that $\bar x = (\bar x_1, \bar x_2, \dots, \bar x_n)$ and $v' = (v'_1, v'_2, \dots, v'_m)$. Consider the matrix
\begin{equation}\label{A'}
	A':=\left (\begin{array}{c c c c}
		v'_1\bar x_1 & v'_1 \bar x_2 &\cdots &v'_1\bar x_n\\
		v'_2\bar x_1 & v'_2 \bar x_2 &\cdots &v'_2\bar x_n\\
		\vdots & \vdots &\cdots &\vdots\\
		v'_m\bar x_1 & v'_m \bar x_2 &\cdots &v'_m\bar x_n
	\end{array}\right) \in \mathbb R^{m \times n}.
\end{equation}
Given any $A \in\mathbb R^{m \times n}$ with
\begin{equation*}
	A=\left (\begin{array}{c c c c}
		a_{11} & a_{12} &\cdots &a_{1n}\\
		a_{21} & a_{22} &\cdots &a_{2n}\\
		\vdots & \vdots &\cdots &\vdots\\
		a_{m1}& a_{m2} &\cdots &a_{mn}
	\end{array}\right),
\end{equation*} we have $$\langle A', A\rangle = \displaystyle \sum_{i=1}^m\sum_{j=1}^{n} v'_i a_{ij} \bar x_j = \displaystyle \sum_{i=1}^m v'_i \left(\displaystyle \sum_{j=1}^{n} a_{ij} \bar x_j\right) = \langle v', A \bar x \rangle.$$
Combining this with~\eqref{inclusion_3}, we get
\begin{equation*}
	\langle A', A\rangle =  - \langle  u', x \rangle -  \langle - v',  \bar A x + A \bar x \rangle
\end{equation*}
for any $(A, x)\in \mathbb R^{m \times n} \times \mathbb R^n$. This means that $A'$ given by~\eqref{A'} and $(u',\widetilde{v}')$ with $ \widetilde{v}':=-v'$ satisfy \eqref{inclusion_2a}, where $\widetilde{v}'$ plays the role of $v'$. Thus, keeping in mind that \eqref{inclusion_1}, where $\widetilde{v}'$ plays the role of $v'$, is satisfied because $u' \in -N(\bar x; C)$ and $\widetilde{v}'\in -N\big(\bar A \bar x; Q\big)$, we then derive from~\eqref{standing_assumption1} that $A' = 0$, $u'= 0$, and $\widetilde{v}'=0$. Hence, $v'=0$. Because $v'\in \big(\bar A^{\rm T}\big)^{-1}\big(-N(\bar x; C)\big) \cap N\big(\bar A \bar x; Q\big)$ is arbitrary, we have proved~\eqref{Lipchitz-like_S1_con}.

The proof of the theorem is complete. $\hfill\Box$

\section{Conclusion}\label{Sect_Conclusions}

Previous work on split equality problems (SEPs) and split feasibility problems (SFPs) all focused on algorithmic methods for
finding a solution. In this paper, however, we have investigated the stability of solutions of SEPs and SFPs
with respect to perturbations of the matrices $A$ and $B$ that define SEPs and SFPs.
[To the best of our knowledge, this is the first-time try in the literature of the study of SEPs and SFPs so far.]
By making use of concepts and tools from set-valued and variational analysis
(cf. Theorem \ref{Thm. 4.2-Mor-2004-JOGO} or \cite[Theorem 4.2(ii)]{Mor04}),
we have successfully established necessary and sufficient conditions for the solution map $S$ of SEP
(defined in \eqref{S}) and the solution map $S_1$ of SFP (defined in \eqref{S1})
to be Lipschitz-like (Theorems \ref{Lipchitz-like_S_thm} and \ref{Lipchitz-like_S1_thm}).
It is an interesting problem of the stability of the solution maps $S$ and $S_1$ of  SEP and SFP, respectively,
with respect to the constraint sets $C$ and $Q$. This problem remains open and is worth of further study.

\section*{Acknowledgements}
This work has been co-funded by the European Union (European Regional Development Fund EFRE, fund number: STIIV-001), the National Natural Science Foundation of China (grant number U1811461), the Australian Research Council/Discovery Project (grant number DP200100124), and the Vietnam Academy of Science and Technology (project code NCXS02.01/24-25). Vu Thi Huong and Nguyen Dong Yen would like to thank the Vietnam Institute for Advanced Study in Mathematics for hospitality during their recent stay.

\medskip

\noindent\textbf{Author Contributions}: All authors contributed equally, read and approved the final manuscript.

\medskip
\noindent\textbf{Competing Interests}: The authors have no relevant financial or non-financial interests to disclose.
		
\small

\end{document}